 \newtheorem{theorem}{Theorem}[section]
 \newtheorem{definition}{Definition}[section]
 \newtheorem{lemma}{Lemma}[section]
\newtheorem{remark}{Remark}[section]
\newtheorem{preposition}{Preposition}[section]
\begin{document}

\nocite{*} 

\title{Analysis of a fully discrete approximation to a moving-boundary problem  describing rubber exposed to diffusants}

 \author{Surendra\,Nepal $^{1,*}$,  Yosief\,Wondmagegne$^1$, Adrian\,Muntean$^1$ \\
$^1$ Department of Mathematics and Computer Science, Karlstad University, Sweden\\
*surendra.nepal@kau.se}
\date{\today} 
\maketitle

\noindent

\begin{abstract}

We present a fully discrete scheme for the numerical approximation of a moving-boundary problem describing diffusants penetration into rubber. Our scheme utilizes the Galerkin finite element method for the space discretization combined with the backward Euler method for the time discretization. Besides dealing with the existence and uniqueness of solution to the fully discrete problem, we derive a \textit{a priori} error estimates for the mass concentration of the diffusants, and respectively, for the position of the moving boundary. 
Numerical illustrations verify the  obtained theoretical order of convergence in physical parameter regimes.


	\vskip1cm
	\noindent \textit{Keywords:}  Moving-boundary problem,  finite element approximation, fully discrete approximation, \textit{a priori} error estimate
		\vskip1cm
	\noindent \textit{MSC 2020 Classification:} 65M15, 65M60, 35R37
	
\end{abstract}
\section{Introduction}

We study the fully discrete approximation of a one-dimensional moving-boundary problem describing the penetration of diffusants into rubber. The model presented here was proposed recently in \cite{nepal2021moving}, where the simulation output was compared to experimental data. In this framework, our interest is focused on the numerical analysis of the model. Relying on previous mathematical analysis work done for  an adsorption model with moving swelling interfaces (see \cite{kumazaki2020global}), which shares the structure of the equations with our current moving-boundary model,   we have provided in \cite{nepal2021error} an analysis of the control of the errors produced by a FEM semi-discretization of our model equations. In this paper, we turn our attention to estimating the errors produced  by combining time and space discretizations.  Our analysis of the fully discrete approximation to the model equations relies on our previous results \cite{kumazaki2020global,nepal2021error} and should be seen as a natural continuation of the work. Browsing the existing literature, one can find a lot of information regarding the rigorous error analysis of semi-discrete approximation of free- and moving-boundary problems. However, much less seems to be known what concerns the analysis of fully discrete approximation schemes even for one dimensional formulations where the moving interface is in fact only a moving point (with {\em a priori} unknown location). The references \cite{ahn2003error,lee2006error} were particularly useful for our investigation. In \cite{lee2006error} H. Y. Lee develops a fully discrete scheme for a Stefan  problem with non-linear free boundary condition and  investigates the order of convergence of the scheme. In ref. \cite{ahn2003error}, the authors construct and analyze fully discrete methods  for a free boundary problem arising in the polymer technology. By using the Galerkin finite element  formulation in space and a backward Euler scheme in time, the authors were able to prove the \textit{a priori} error estimate for the concentration of the solvent and for the position of  moving boundary. At the technical level, we were very much inspired by the technique that has been used in \cite{ahn2003error} to get the \textit{a priori} error bound.  It is also worth mentioning that the main difference between the problem considered in these papers and our problem lies in  the
choice of the boundary conditions. Both of the cited papers impose at least 
an homogeneous Dirichlet boundary condition at one of the boundaries, while we
impose flux boundary conditions at both boundaries that bring in boundary terms that need a careful handling. \\

The problem setting we are studying here is as follows: For a fixed given observation time $T_f\in (0, \infty)$, let the interval $[0, T_f]$ be the time span of the physical processes we are considering.  Let  $x\in [0, s(t)]$ and $t \in [0, T_f]$ denote the space and respectively, the time variable. Let $m(t, x)$  be the concentration of diffusant placed in position $x$ at time  $t$.
 The diffusants concentration  $m(t, x)$  acts in the region $Q_s(T_{f})$ defined by $$ Q_s(T_{f}):= \{ (t, x) | t \in (0, T_{f}) \; \text{and}\; x \in (0, s(t))\}.$$ 
 The problem reads: Find 
 $m(t, x)$ together with the position of the moving boundary (interface) $x = s(t)$ for $t\in(0, T_f)$ such that the couple $(m(t, x), s(t))$ satisfies the following evolution problem: 
\begin{align}
\label{a11}&\displaystyle \frac{\partial m}{\partial t} -D \frac{\partial^2 m}{\partial x^2} = 0\;\;\; \ \text{in}\;\;\; Q_s(T_{f}),\\
\label{a12}
&-D \frac{\partial m}{\partial x}(t, 0) = \beta(b(t) -\text{H}m(t, 0))  \;\;\; \text{for}\;\;  t\in(0, T_{f}),\\
\label{a13}&-D \frac{\partial m}{\partial x}(t, s(t))  =s^{\prime}(t)m(t, s(t))  \;\;\; \text{for}\;\; t\in(0, T_{f}),\\
\label{a14}&s^{\prime}(t) = a_0 (m(t, s(t)) - \sigma(s(t)) \;\;\;\;\text{for } \;\;\; t \in (0,T_{f}),\\
\label{a15}&m(0, x) = m_0(x) \;\;\;\text{for}\;\;\; x \in [0, s(0)],\\
\label{a16} & s(0) = s_0>0\; \text{with}\;\; 0<  s_0< s(t) < L, 
\end{align}
where  $a_0>0$ is a  kinetic coefficient, $\beta$ is a positive constant describing the capacity of the interface at $x=0$,  $D>0$ is the effective diffusion constant, $\text{H}>0$ is the Henry's constant. Additionally, $\sigma$ is a real function, $b$ is a given boundary concentration on $[0, T]$,
$s_0>0$ is the initial position of the moving boundary, while $m_0$ represents the initial concentration of the diffusant. 

This model reminds of the work by Astarita and collaborators (compare \cite{astaluta1978class} and follow-up papers) on free boundary problems posed in the context of  polymeric materials; see \cite{crank1984free,fasano1986problem,murray1988finite,conrad1990well} for classical older works on the topic.   We refer the reader to \cite{vcanic2021moving} to a very recent collection of   modeling, mathematical analysis, and numerical simulation aspects of moving-boundary problems arising in {\em fluid-structure} interaction scenarios. It is worthwhile to note that in our model the effect of the {\em structure} part, i.e. either rubber's mechanics (see e.g. \cite{aiki2020macro}) and/or material's capacity to perform capillary transport (see e.g. \cite{lunowa2021dynamic}), is incorporated in the shape of the nonlinearity $\sigma(\cdot)$. 

The paper has the following structure: In Section \ref{preliminaries} we specify the used notation,  technical assumptions, as well as  a couple of  of useful basic inequalities. The weak formulation of our model together with the FEM discretization in space are included in Section \ref{transformation}. We recall here also some results obtained earlier by us concerning the semi-discrete FEM approximation. The bulk of the paper is the error analysis of the fully discrete approximation of our concept of solution. This is the  purpose of Section \ref{fully}.   Section \ref{experiments} contains a couple of numerical experiments confirming the theoretical convergence rates.  Our conclusions on the obtained estimates and ideas for further work for are listed in Section \ref{conclusion}.

\section{Notation. Basic inequalities. Technical assumptions} \label{preliminaries}

 In this framework, standard notations for Sobolev and Bochner spaces are used. An introduction to  Sobolev and Bochner spaces as well as the usual notations, definition of norms and inner products can be found, for instance, in  \cite{adams2003sobolev, kufner1977function}. For the convenience of writing, we denote by $\|\cdot\|$ and $(\cdot,\cdot)$ the norm, and respectively, the inner product in  $L^2(\Omega)$.  Furthermore, $\|\cdot\|_\infty$ refers to the norm of $L^\infty(\Omega)$. We  also use the notation $(^\prime)$ to indicate the derivative with respect to time variable.\\
For the benefit of reader,  we collect a few elementary inequalities that we frequently use in this context.
\begin{enumerate}[label = (\roman*)]
	\item Young's inequality:
	\begin{align}
	\label{a3}ab \leq \xi a^2 + c_{\xi} b^2,
	\end{align}
	where $a, b \in \mathbb{R}_{+},\; \xi >0,\; c_{\xi}:= \displaystyle \frac{1}{4\xi} >0.$
	\item Interpolation inequality:  For all $u \in H^1(0, 1)$,
		it exists a constant $\hat{c}>0$ depending on $\theta \in [\frac{1}{2}, 1)$ such that 
		\begin{align}
		\label{a33}
		\| u\|_{\infty} \leq \hat{c} \|u\|^{\theta} \left\Vert u\right\Vert_{H^1(0,1)}^{1 - \theta}.
		\end{align}
		For  $\theta = 1/2$, one gets 
		\begin{align*}
		\| u\|_{\infty}^2\leq \hat{c}\left(\xi \left\Vert \frac{\partial u}{\partial y}\right\Vert^2 + ( \xi + c_{\xi}) \|u\|^2 \right),
		\end{align*} 
		where $\xi$ and $c_{\xi}$ are as in \eqref{a3}; see details in \cite{zeidlernonlinear} p. 285 (example 21.62). Note that for $\phi \neq \Omega \subset \mathbb{R}^d$ a bounded domain, \eqref{a33} is related to the so-called Agmon's inequality, i.e. for all $u \in H^{s_2}(\Omega)$ it exists $\hat{c}>0$ such that 
		\begin{align*}
		    \| u\|_{\infty} \leq \hat{c} \| u\|^{\theta}_{H^{s_1}(\Omega)}  \| u\|^{1- \theta}_{H^{s_2}(\Omega)}
		\end{align*}
		for $0< \theta <1,\; s_1 < d/2 < s_2,\; d/2 = \theta s_1 + (1-\theta)s_2$. Taking here $s_1=0, s_2 = 1$ and $d=1$, we are led to $\eqref{a33}$.
\end{enumerate}
Throughout this paper, the involved parameters are assumed to fulfill the following conditions: 
\begin{enumerate}[ label = ({A}{\arabic*})]
	\item \label{A1} 	$a_0,\;\text{H},\; D,\; s_0, \; T_{f}$ are positive constants.
	\item  	\label{A2}$b \in W^{1, 2}(0, T_{f})$ with $0< b_{*} \leq b \leq b^{*}$ on $(0, T_{f})$, where $b_{*}$ and $b^{*}$ are positive constants.
	\item 	\label{A3} $\beta \in C^1(\mathbb{R}) \cap W^{1, \infty} (\mathbb{R})$ such that $\beta = 0$ on $(\infty, 0]$, and there exists $r_{\beta}>0$ such that $\beta^{\prime}>0$ on $(0, r_{\beta})$ and $\beta = k_0$ on $[r_{\beta}, +\infty)$, where $k_0>0$.
	\item 	\label{A4} $\sigma \in C^1 (\mathbb{R}) \cap W^{1, \infty} (\mathbb{R})$ such that $\sigma = 0$ on $ (-\infty, 0 )$, and  there exists  $r_{\sigma}$ such that $\sigma^{\prime}>0$ on $(0, r_{\sigma})$ and $\sigma = c_0$ on $[r_{\sigma}, +\infty),$ where $c_0$  satisfies 
	\begin{align}
	0 <c_0 < \min\{ 2 \sigma(0), b^*\text{H}^{-1}\}.
	\end{align}
	\item 	\label{A5}$0<s_0< r_{\sigma}$ and $m_0 \in H^1(0, s_0)$ such that $\sigma(0) \leq u_0 \leq b^*\text{H}^{-1}$ on $[0, s_0].$
\end{enumerate}
The assumptions \ref{A1}--\ref{A5} are adopted from \cite{kumazaki2020global}. (A1) and (A2) have a clear physical meaning, while (A3)-(A5) are of pure technical nature. They delimit a framework where the solvability of our moving-boundary problem is guaranteed. \\

\section{Weak formulation. Galerkin approximation. Preliminary results}
\label{transformation}
As introduced in \cite{nepal2021moving, nepal2021error}, after the non-dimensionalization   and  transformation $y = x/s(t)$, the  problem \eqref{a11}--\eqref{a16} transforms into the following problem in fixed domain  $Q(T) := \{ (\tau, y) |\; \tau \in (0, T) \; \text{and}\; y \in (0, 1)\}$.
\begin{align}
\label{aa17} 
&\displaystyle\frac{\partial{u}}{\partial{\tau}} - y\frac{h^{\prime}(\tau)}{h(\tau)}  \frac{\partial{u}}{\partial{y}}- \frac{1}{(h(\tau))^2}\frac{\partial^2{u}}{\partial{y^2}} = 0
\;\;\; \ \text{in}\;\;\;Q(T),\\
\label{aa18} &- \frac{1}{h(\tau)} \frac{\partial u}{\partial y}(\tau, 0) = \text{\rm{Bi}}\left(\frac{b(\tau)}{m_{0}}- \text{H}u(\tau, 0)\right)\;\;\; \text{for}\;\; \tau\in(0, T), \\
\label{aa19}&- \frac{1}{h(\tau)} \frac{\partial u}{\partial y}(\tau, 1) =  h^{\prime}(\tau)u(\tau, 1) \;\;\; \text{for}\;\; \tau\in(0, T), \\
&\label{aa20} h^{\prime}(\tau) =A_0\left(u(\tau, 1)-\frac{\sigma(h(\tau))}{m_{0}}\right)\;\;\; \text{for}\;\; \tau\in(0, T)\\
\label{aa21} &u(0, y)  = u_0(y) \;\;\; \text{for}\;\; y\in[0, 1], \\
\label{aa22}&h(0) = 1.
\end{align} 
We refer to the system \eqref{aa17}--\eqref{aa22} posed in the cylindrical domain  $Q(T)$  as problem $(P)$. 
\begin{definition}
	\label{D1}
	{\rm (Weak Solution to ($P$))}. We call the couple $(u, h)$ a weak solution to problem {\rm ($P$)} on $S_T:=(0, T)$ if and only if  
	\begin{align*}  &h\in W^{1, \infty}(S_{T}) \;\;\text{with}\;\;h_0 < h(T) \leq L,\\
	&u \in W^{1,2}(Q(T)) \cap L^{\infty}(S_{T}, H^1(0, 1)) \cap L^2(S_{T}, H^2(0, 1)),
	\end{align*} 
	such that for all $\tau \in S_{T}$ the following relations hold 
	\begin{align}
	\nonumber\displaystyle \left(\frac{\partial u}{\partial \tau}, \varphi \right)&-  \frac{h^{\prime}(\tau)}{h(\tau)}\left( y\frac{\partial u}{\partial y},  \varphi \right) +  \frac{1}{(h(\tau))^2}\left(\frac{\partial{u}}{\partial{y}}, \frac{\partial \varphi }{\partial y}  \right)\\
	\label{a17}& - \frac{1}{h(\tau)} {\rm Bi}\left(\frac{b(\tau)}{m_{0}}- {\rm H}u(\tau, 0)\right)\varphi(0) +\frac{h^{\prime}(\tau)}{h(\tau)}u(\tau, 1)\varphi (1) = 0 \;\;\text{for all} \; \varphi \in H^1(0,1),\\
	& \label{a18} h^{\prime}(\tau) =A_0\left(u(\tau, 1)-\frac{\sigma(h(\tau))}{m_{0}}\right),\\
	\label{u}&u(0, y) = u_0(y)\;\; \text{for}\;\; y \in[0, 1],  \\
	\label{h}&h(0) = 1.
	\end{align}
\end{definition}
\begin{theorem}
	If {\rm \ref{A1}--\ref{A5}} hold, then the problem $(P)$ has a unique solution $(u, h)$ on $S_T$ in the sense of Definition \ref{D1}.
\end{theorem}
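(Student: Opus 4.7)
My plan is to establish existence by a fixed-point argument on the ODE for $h$, coupled with Galerkin/energy estimates for the parabolic part, and then to prove uniqueness by a difference estimate and Gronwall. Since the structure of problem $(P)$ matches the adsorption model studied in \cite{kumazaki2020global}, the result is essentially an adaptation of the framework developed there; the differences lie in the form of the boundary fluxes, which need to be accommodated in the a priori bounds.

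First I would fix $T^* \in (0, T_f]$ sufficiently small and work on the admissible set
\begin{equation*}
M(T^*) := \bigl\{ \tilde{h} \in W^{1,\infty}(0,T^*) : \tilde{h}(0) = 1,\ h_0 \leq \tilde{h}(\tau) \leq L,\ \|\tilde{h}'\|_{L^\infty} \leq K \bigr\}
\end{equation*}
for suitable constants $h_0, L, K$ dictated by assumptions \ref{A1}--\ref{A5}. For $\tilde{h}\in M(T^*)$ given, the PDE \eqref{aa17} with boundary conditions \eqref{aa18}--\eqref{aa19} becomes a linear parabolic problem with smooth coefficients; its solution $u^{\tilde h}$ is obtained via a standard Faedo--Galerkin construction using the finite-dimensional spaces already introduced in Section \ref{transformation}. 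Using assumption \ref{A3} on $\beta$ (bounded, Lipschitz) and \ref{A2} on $b$, I would derive energy estimates by testing the weak form \eqref{a17} against $u^{\tilde h}$ and then against $\partial_\tau u^{\tilde h}$, exploiting Young's inequality \eqref{a3} and the trace interpolation inequality \eqref{a33} to absorb the boundary terms; this yields uniform bounds in $L^\infty(S_{T^*}; H^1(0,1)) \cap L^2(S_{T^*}; H^2(0,1)) \cap W^{1,2}(Q(T^*))$. A maximum-principle/truncation argument, analogous to the one used in \cite{kumazaki2020global}, then gives the physical bounds $\sigma(0) \leq u^{\tilde h} \leq b^*{\rm H}^{-1}$, which, together with \ref{A4}, keep the argument of $\sigma$ and the right-hand side of \eqref{a18} in the controlled regime.

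Next I would define the map $\mathcal{T}: \tilde{h} \mapsto h$, where $h$ solves the ODE \eqref{a18} with $u^{\tilde h}(\tau,1)$ on the right-hand side; existence and uniqueness of $h \in W^{1,\infty}(0,T^*)$ follow from Picard--Lindel\"of since the right-hand side is Lipschitz in $h$ by \ref{A4}. The bounds on $u^{\tilde h}$ show that $\mathcal{T}$ maps $M(T^*)$ into itself. For two inputs $\tilde h_1, \tilde h_2 \in M(T^*)$, subtracting the two parabolic problems and testing the difference against itself gives, after absorbing boundary contributions via \eqref{a33}, an estimate of the form
\begin{equation*}
\|u^{\tilde h_1} - u^{\tilde h_2}\|_{L^\infty(S_{T^*};L^2) \cap L^2(S_{T^*};H^1)}^2 \leq C(T^*)\, \|\tilde h_1 - \tilde h_2\|_{W^{1,\infty}(0,T^*)}^2,
\end{equation*}
with $C(T^*) \to 0$ as $T^* \to 0$. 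Integrating \eqref{a18} then yields $\|\mathcal{T}\tilde h_1 - \mathcal{T}\tilde h_2\|_{W^{1,\infty}} \leq \tilde C(T^*) \|\tilde h_1 - \tilde h_2\|_{W^{1,\infty}}$. Choosing $T^*$ small enough to make $\tilde C(T^*) < 1$ gives a contraction, hence a unique fixed point on $[0,T^*]$. Because the a priori bounds do not degenerate (they depend only on $T_f$ and the data via \ref{A1}--\ref{A5}), the solution can be continued step by step up to the final time $T_f$.

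For uniqueness on the whole interval, I would take two weak solutions $(u_i, h_i)$, $i=1,2$, form the differences $U = u_1-u_2$ and $H = h_1-h_2$, test the difference of the weak formulations \eqref{a17} by $U$, and combine with the difference of \eqref{a18}. The nontrivial boundary terms produced by \eqref{aa18}--\eqref{aa19} are controlled once again by \eqref{a33} combined with \eqref{a3}, and the nonlinearities in $\beta$ and $\sigma$ are handled by their global Lipschitz continuity from \ref{A3}--\ref{A4}. The resulting inequality has Gronwall form in $\|U\|^2 + |H|^2$, forcing $U \equiv 0$ and $H \equiv 0$. The main obstacle throughout is the repeated appearance of pointwise boundary traces multiplied by $h'(\tau)$ in \eqref{a17}: these have to be bounded by the $H^1$-norm via \eqref{a33} with $\xi$ small enough that the resulting diffusive contribution can be absorbed into the good term, which is precisely where the constants $K$ and $T^*$ defining $M(T^*)$ interact delicately.
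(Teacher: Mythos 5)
Your plan is sound in outline, but note first that the paper itself does not prove this theorem at all: its ``proof'' is a citation of Theorems 3.3 and 3.4 of \cite{kumazaki2020global}, and what you have written is essentially a reconstruction of the strategy carried out in that reference (freeze the boundary $\tilde h$ in an admissible set, solve the resulting linear parabolic problem by Faedo--Galerkin with energy and maximum-principle bounds, close a fixed-point argument for $h$ via the ODE \eqref{a18}, continue in time using non-degenerating a priori bounds, and prove uniqueness by a Gronwall estimate on $\|u_1-u_2\|^2+|h_1-h_2|^2$ with the boundary traces absorbed through \eqref{a33} and \eqref{a3}). So in substance you are following the same route as the proof the paper relies on, just spelled out rather than cited.

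One step of your sketch does not go through as stated: the contraction estimate in the full $W^{1,\infty}(0,T^*)$ norm. Your difference estimate for $u^{\tilde h_1}-u^{\tilde h_2}$ controls only $L^\infty(S_{T^*};L^2)\cap L^2(S_{T^*};H^1)$, and by the interpolation inequality \eqref{a33} this bounds the trace difference $|u^{\tilde h_1}(\tau,1)-u^{\tilde h_2}(\tau,1)|$ only after integration in time, not uniformly in $\tau$. Hence you can contract the map $\mathcal{T}$ in the $C([0,T^*])$ (sup) norm of $h$, but the derivative component $(\mathcal{T}\tilde h_1)'-(\mathcal{T}\tilde h_2)'=A_0\bigl(u^{\tilde h_1}(\cdot,1)-u^{\tilde h_2}(\cdot,1)\bigr)-\tfrac{A_0}{m_0}\bigl(\sigma(h_1)-\sigma(h_2)\bigr)$ requires pointwise-in-time trace control, i.e.\ an $L^\infty(S_{T^*};H^1)$ estimate for the difference, which your testing scheme does not provide. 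The fix is standard -- either run the fixed point in the weaker metric and recover the $W^{1,\infty}$ bound a posteriori from the ODE and the uniform bounds on $u$ and $\sigma$, or derive the stronger difference estimate by testing with the time derivative of the difference -- but as written the claimed $\tilde C(T^*)<1$ in $W^{1,\infty}$ is not justified. The remaining ingredients (maximum principle giving $\sigma(0)\le u\le b^*\mathrm{H}^{-1}$, global continuation, Gronwall uniqueness in the sense of Definition \ref{D1}) are consistent with the cited results, though each is itself a nontrivial piece of \cite{kumazaki2020global} rather than a one-line observation.
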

\begin{proof} We refer the reader to Theorem 3.3 and Theorem 3.4 in \cite{kumazaki2020global} for a way to ensure the global solvability of the problem and continuous dependence estimates of the solution  with respect to the initial data..
\end{proof}

We discretize the fixed domain $\Omega = (0, 1)$ as follows.
Let $N \in \mathbb{N}$ be 
given.
We set $0 =y_0 < y_1 < \cdots < y_{N-1} = 1$ as discretization points in the interval $[0, 1]$. We set $k_i := y_{i+1} - y_{i}$  for $i \in \{0,1,\cdots, N-2\}$ and  $k := \displaystyle \max \{k_i: \ i \in \{0,1,\cdots, N-2\}\}$. We introduce the space 
\begin{align} \label{finite}
V_k := \{ \nu \in C[0, 1]: \nu |_{[y_j, y_{j+1}]}  \in \mathbb{P}_1\}, 
\end{align} 
as a finite dimesnional subspace of $H^1(0,1)$. Here $\mathbb{P}_1$ represents the set of 
polynomials of degree one.
We define the interpolation operator $I_k:C[0, 1] \rightarrow V_k$ by
\begin{align*}
(I_k u)(y):= \sum_{i=0}^{N-1} u(y_i, t)\phi_i(y),
\end{align*}
where $\{\phi_i\}_{i=0}^{N-1}$ are a set of basis functions for the space $V_k$.
 Here the function $I_k u$ is called the Lagrange interpolant of $u$ of degree 1; for more details see e.g. \cite{larsson2008partial, asadzadeh2020introduction}. 
\begin{lemma} \label{lemma1}
	Take $\theta \in [\frac{1}{2}, 1)$ and $\psi\in H^2(0,1)$.  Then there exist  strictly positive constants $\gamma_1,\; \gamma_2$ and $\gamma_3$ such that the Lagrange interpolant $I_k \psi$ of $\psi$ satisfies the following estimates:
	\begin{enumerate}[label = {\rm (\roman*)}]
		\item  	\label{l1} $\|\psi - I_k\psi \|\leq \gamma_1 k^2 \|\psi \|_{H^2(0,1)}$, 
		\item  \label{l2} $\left\Vert\displaystyle\frac{\partial}{\partial y}  (\psi- I_k\psi) \right\Vert\leq \gamma_2 k \|\psi \|_{H^2(0,1)}$ ,
		\item 	\label{l3}	$|\psi(0) - I_k\psi(0)|\leq  \hat{c} \left(  \gamma_1 k^2 + \gamma_3 k^{1+\theta} \right) \|\psi \|_{H^2(0,1)}$, 
		\item \label{l4}	$|\psi(1) - I_k \psi(1)|\leq \hat{c} \left(  \gamma_1 k^2 + \gamma_3 k^{1+\theta} \right) \|\psi \|_{H^2(0,1)}$. 
	\end{enumerate}
\end{lemma}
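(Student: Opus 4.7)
The plan is to treat the four estimates in two groups. Estimates (i) and (ii) are the classical $L^2$ bounds for piecewise-linear Lagrange interpolation on a one-dimensional mesh, and (iii)–(iv) will be deduced from them through the interpolation inequality \eqref{a33}.

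For (i) and (ii) I would work element by element. On each subinterval $[y_i,y_{i+1}]$ the operator $I_k$ reproduces affine functions, so an elementary argument (either a scaled Bramble--Hilbert estimate on the reference element $[0,1]$, or the pointwise remainder formula $(\psi-I_k\psi)(y)=\tfrac12(y-y_i)(y-y_{i+1})\psi''(\eta)$ combined with the identity $\psi''\in L^2$) yields
\begin{align*}
\|\psi-I_k\psi\|_{L^2(y_i,y_{i+1})}\le C\,k_i^2\,\|\psi''\|_{L^2(y_i,y_{i+1})},\qquad
\|\partial_y(\psi-I_k\psi)\|_{L^2(y_i,y_{i+1})}\le C\,k_i\,\|\psi''\|_{L^2(y_i,y_{i+1})}.
\end{align*}
Squaring, summing over $i=0,\dots,N-2$, using $k_i\le k$ and $\sum_i\|\psi''\|_{L^2(y_i,y_{i+1})}^2=\|\psi''\|^2\le\|\psi\|_{H^2(0,1)}^2$, I pick up constants $\gamma_1,\gamma_2>0$ independent of $k$ and $\psi$ that realise (i) and (ii).

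For (iii) and (iv), the starting point is that $|\psi(0)-I_k\psi(0)|$ and $|\psi(1)-I_k\psi(1)|$ are both bounded by $\|\psi-I_k\psi\|_\infty$. Applying the interpolation inequality \eqref{a33} to $u:=\psi-I_k\psi\in H^1(0,1)$ with $\theta\in[1/2,1)$ gives
\begin{align*}
\|\psi-I_k\psi\|_\infty \le \hat c\,\|\psi-I_k\psi\|^{\theta}\,\|\psi-I_k\psi\|_{H^1(0,1)}^{1-\theta}.
\end{align*}
Using $\|\psi-I_k\psi\|_{H^1(0,1)}\le\|\psi-I_k\psi\|+\|\partial_y(\psi-I_k\psi)\|$ together with the subadditivity $(a+b)^{1-\theta}\le a^{1-\theta}+b^{1-\theta}$ (valid since $1-\theta\in(0,1]$), I would split
\begin{align*}
\|\psi-I_k\psi\|_\infty \le \hat c\Bigl(\|\psi-I_k\psi\|+\|\psi-I_k\psi\|^{\theta}\,\|\partial_y(\psi-I_k\psi)\|^{1-\theta}\Bigr).
\end{align*}
Inserting the bounds from (i) and (ii) produces
$\hat c\bigl(\gamma_1 k^2+\gamma_1^{\theta}\gamma_2^{1-\theta}\,k^{2\theta+(1-\theta)}\bigr)\|\psi\|_{H^2(0,1)}
=\hat c\bigl(\gamma_1 k^2+\gamma_3 k^{1+\theta}\bigr)\|\psi\|_{H^2(0,1)}$
with $\gamma_3:=\gamma_1^{\theta}\gamma_2^{1-\theta}$, which is exactly (iii) and (iv).

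There is no serious obstacle: (i)--(ii) are a textbook calculation and (iii)--(iv) are essentially a direct application of \eqref{a33}. The only points demanding a bit of care are tracking the constants so that they are truly independent of $k$ and $\psi$, and justifying the split of the $H^1$-norm via the subadditivity of $x\mapsto x^{1-\theta}$, for which the hypothesis $\theta\in[1/2,1)$ is precisely what is needed.
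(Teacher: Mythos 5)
Your proposal is correct and follows essentially the same route as the paper: (i)--(ii) are the standard piecewise-linear interpolation estimates (which the paper simply cites from the textbooks), and (iii)--(iv) are obtained exactly as in the paper's reference, namely by bounding the boundary values by $\|\psi-I_k\psi\|_\infty$ and applying the interpolation inequality \eqref{a33} together with (i)--(ii), yielding $\gamma_3=\gamma_1^{\theta}\gamma_2^{1-\theta}$. The only tiny inaccuracy is your closing remark: the restriction $\theta\in[\frac12,1)$ is needed for the stated form of \eqref{a33}, not for the subadditivity of $x\mapsto x^{1-\theta}$, which only requires $\theta<1$.
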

\begin{proof}
 The inequalities \ref{l1} and \ref{l2} are standard results,  see for instance p.61 in \cite{larsson2008partial} and p. 90 in \cite{asadzadeh2020introduction} for details on their proof. The proof of \ref{l3} and \ref{l4} follows from using the interpolation inequality, \ref{l1} and \ref{l2}.  We refer the reader to Lemma 2.1 in \cite{nepal2021error} for details on their proof..
\end{proof}

The continuous in time finite element approximation $u_k$ and $h_k$ of $u$ and $h$ is now represented by the mappings
$u_k: [0, T]\rightarrow V_k$ and  $h_k: [0, T]\rightarrow \mathbb{R}_+$. Our concept of solution to the semi-discrete problem is defined next.

\begin{definition}
	\label{D2}
	 We call the couple $(u_k, h_k)$ a weak solution to the semi-discrete formulation if and only if there is a $S_{{T}}:= (0, {T})$  (for some ${T}>0$) such that 
	\begin{align*}  & h_k\in W^{1, \infty}(S_{{T}}) \;\; \text{with}\;\; h_0 < h_k({T}) \leq L\\
	 &  u_k \in H^1(S_{{T}}, V_k)\cap L^{2}(S_{{T}}, H^1(0, 1)) \cap L^{\infty}(S_{{T}}, L^2(0, 1))
	\end{align*} 
	and for all $\tau \in S_{{T}}$ it holds
\begin{align}
	\nonumber\displaystyle \left( \frac{\partial u_k}{\partial \tau},  \varphi_k \right)&-  \frac{h_k^{\prime}(\tau)}{h_k(\tau)}\left( y\frac{\partial u_k}{\partial y}, \varphi_k \right) +  \frac{1}{(h_k(\tau))^2}\left(\frac{\partial{u_k}}{\partial{y}}, \frac{\partial\varphi_k }{\partial y} \right)\\
	\label{a19}&- \frac{1}{h_k(\tau)} {\rm Bi}\left(\frac{b(\tau)}{m_{0}}- {\rm H}u_k(\tau, 0)\right) \varphi_k(0) +\frac{h_k^{\prime}(\tau)}{h_k(\tau)}u_k(\tau, 1)\varphi_k (1) = 0 \;\; \text{for all}\;\; \varphi_k \in V_k, \\
	\label{a20} & h_k^{\prime}(\tau) = A_0\left(u_k(\tau, 1)-\frac{\sigma(h_k(\tau))}{m_0}\right),\\
	\label{a21}& u_k(0) = u_{0,k}(y) \;\; {\rm for}\;\; y \in[0, 1],\\
	\label{a22}& h_k(0) = 1.
\end{align}
\end{definition}
\begin{theorem}
	Assume {\rm \ref{A1}--\ref{A5}} hold.
	Then  it exists a unique solution 
	\begin{align*}
	  (u_k, h_k) \in H^1(S_{\hat{T}}, V_k)\cap L^{2}(S_{\hat{T}}, H^1(0, 1)) \cap L^{\infty}(S_{\hat{T}}, L^2(0, 1)) \times W^{1, \infty}(S_{\hat{T}})
	\end{align*}
on a time $S_{\hat{T}}:=(0, \hat{T})$ for  $\hat{T}\in (0, T]$	in the sense of Definition \ref{D2}. Furthermore, there exists a constant $\tilde{c}>0$ (independent of $k$) such that
\begin{align}
\label{energy}	 \max_{0\leq \tau\leq \hat{T}} \| u_k\|_{L^2(0, 1)} ^2 + \int_0^{\hat{T}} \left\Vert \frac{\partial u_k}{\partial y}\right\Vert^2_{L^2(0,1)}d \tau \leq \tilde{c}.
\end{align}
\end{theorem}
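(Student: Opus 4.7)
The plan is to recast the semi-discrete problem as a finite-dimensional nonlinear ODE system via the Galerkin ansatz, invoke Picard--Lindelöf for local existence and uniqueness, and then derive the energy estimate \eqref{energy} by testing \eqref{a19} with $u_k$ itself, handling the convective and boundary contributions with the structural bounds on $h_k$, $h_k'$, $\sigma$ and $\beta$.

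First, expanding $u_k(\tau, y) = \sum_{i=0}^{N-1} U_i(\tau)\phi_i(y)$ and choosing $\varphi_k = \phi_j$ in \eqref{a19}, together with \eqref{a20}, I obtain an $(N+1)$-dimensional ODE system for $\mathbf{U}(\tau) = (U_0(\tau),\ldots,U_{N-1}(\tau),h_k(\tau))$. The mass matrix $M_{ij} = (\phi_i,\phi_j)$ is symmetric positive definite, hence invertible, so the system can be written in normal form $\mathbf{U}'(\tau) = F(\tau,\mathbf{U}(\tau))$. Since $h_k(0) = 1 > 0$, the coefficients $1/h_k$ and $1/h_k^2$ are smooth in a neighborhood of the initial datum, and the Lipschitz regularity of $\sigma$, $\beta$, $b$ supplied by \ref{A2}--\ref{A4} makes $F$ locally Lipschitz in $\mathbf{U}$. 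Picard--Lindelöf then delivers a unique maximal $C^1$ solution on some interval $[0,\hat{T}^\ast)$.

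To get the energy bound and extend the solution, I would test \eqref{a19} with $\varphi_k = u_k$. The convective term is reshaped by integration by parts in $y$:
\[
\bigl(y\,\partial_y u_k, u_k\bigr) \;=\; \tfrac{1}{2}u_k(\tau,1)^2 - \tfrac{1}{2}\|u_k\|^2,
\]
which combines with the boundary term at $y=1$ to leave $\tfrac{1}{2}(h_k'/h_k)\bigl(u_k(\tau,1)^2 + \|u_k\|^2\bigr)$; the quotient $h_k'/h_k$ is controlled as long as $h_k \ge h_0$, because \eqref{a20} and the boundedness of $\sigma$ guaranteed by \ref{A4} yield $|h_k'| \le A_0(|u_k(\tau,1)| + c_0/m_0)$. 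The Robin-type term at $y=0$ is bounded via \ref{A2}--\ref{A3} together with the interpolation inequality \eqref{a33} taken at $\theta = 1/2$, producing a small multiple of $\|\partial_y u_k\|^2$ that is absorbed into the coercive contribution $(1/h_k^2)\|\partial_y u_k\|^2$, which remains non-degenerate as long as $h_k \le L$. A further application of Young's inequality \eqref{a3} then yields a differential inequality of the form
\[
\frac{d}{d\tau}\|u_k\|^2 + c_1 \left\Vert \frac{\partial u_k}{\partial y}\right\Vert^2 \;\le\; c_2\|u_k\|^2 + c_3,
\]
with constants depending only on the data, $h_0$, $L$ and $T$, and hence independent of $k$; Gronwall's lemma then produces \eqref{energy}.

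The main obstacle is the circular coupling between the two estimates: the inequality above is valid only while $h_0 < h_k(\tau) < L$, yet $h_k$ stays in that window only while $h_k'$, i.e.\ $u_k(\tau,1)$, is controlled. I would close the loop by a standard continuation argument: set $\hat{T} := \sup\{\tau \in [0,\hat{T}^\ast) : h_0 < h_k(s) < L \text{ for all } s \in [0,\tau]\}$. On $[0,\hat{T})$ the energy estimate is valid, so $h_k'$ is uniformly bounded and $h_k$ is Lipschitz; combined with $h_k(0) = 1 \in (h_0, L)$ this forces $\hat{T} > 0$ and delivers a solution with the regularity required by Definition \ref{D2}. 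The argument parallels the continuous-level analysis in \cite{kumazaki2020global} and the semi-discrete treatment in \cite{nepal2021error}.
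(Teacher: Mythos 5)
Your overall architecture --- Galerkin ansatz reduced to a locally Lipschitz ODE system in normal form, Picard--Lindel\"of for local existence and uniqueness, testing \eqref{a19} with $\varphi_k=u_k$, and a continuation argument keeping $h_k$ in the admissible window --- is the natural route, and it is essentially what the paper relies on (the paper gives no proof of its own; it defers entirely to Theorem 4.2 of \cite{nepal2021error}). The identity $(y\,\partial_y u_k,u_k)=\tfrac12 u_k(\tau,1)^2-\tfrac12\|u_k\|^2$, the treatment of the Robin term at $y=0$ via \eqref{a33} and \eqref{a3}, and the observation that coercivity of $h_k^{-2}\|\partial_y u_k\|^2$ only needs $h_k\le L$ are all correct.

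The gap is in the claimed differential inequality $\frac{d}{d\tau}\|u_k\|^2+c_1\|\partial_y u_k\|^2\le c_2\|u_k\|^2+c_3$. After testing, the moving-boundary contribution sits on the left-hand side as $+\tfrac12 (h_k'/h_k)\bigl(u_k(\tau,1)^2+\|u_k\|^2\bigr)$, and it has a favorable sign only if $h_k'\ge 0$; that monotonicity is known for the continuous solution but is not available at the discrete level (no discrete maximum principle is established --- compare the candid discussion in the proof of Lemma \ref{Lemma1} for the fully discrete case). If, as you propose, you instead estimate $|h_k'|\le A_0(|u_k(\tau,1)|+c_0/m_0)$ from \eqref{a20} and \ref{A4}, then the term $|h_k'|\,u_k(\tau,1)^2$ is \emph{cubic} in $u_k$: using \eqref{a33} and Young it can only be absorbed at the price of a superlinear zero-order term (of the type $\|u_k\|^6$), so the linear Gronwall step with constants ``depending only on the data, $h_0$, $L$ and $T$'' is not justified as written. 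One must either prove $u_k(\tau,1)\ge \sigma(h_k)/m_0$ (hence $h_k'\ge 0$) at the semi-discrete level, or replace Gronwall by a nonlinear comparison argument, which yields \eqref{energy} only on a possibly shortened interval --- this is exactly why the theorem asserts existence on some $\hat T\in(0,T]$ rather than on all of $(0,T)$, and your stopping time should fold this into the same continuation argument as the window $h_0<h_k\le L$. A further small omission: for $\tilde c$ to be independent of $k$ you also need the discrete initial datum $u_{0,k}$ (e.g.\ interpolant or $L^2$-projection of $u_0$) to be bounded in $L^2(0,1)$ uniformly in $k$; this hypothesis should be stated where the ODE initial data are introduced.
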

\begin{proof}
We refer the reader to Theorem 4.2 in our previous work \cite{nepal2021error} for details of the proof.
\end{proof}
We anticipate in Proposition \ref{higher} a regularity result that is going to be useful when proving convergence rates for the proposed fully-discrete approximation. Mind though that the convergence of our scheme holds for much less regularity than stated. 

\begin{preposition}\label{higher}
(Higher regularity). Assume that {\rm \ref{A1}-\ref{A5}} hold together with $b\in W^{2,2}(0, T)$ and $u_0 \in H^2(0, s_0)$. Then  
	\begin{align*}  &p\in W^{1, \infty}(S_{T}) \\
	&w \in W^{1,2}(Q(T)) \cap L^{\infty}(Q(T)) \cap L^2(S_{T}, H^2(0, 1)),
	\end{align*} 
	where $p:= h^{\prime}$, $w:= \partial_\tau u$, and $(u, h)$ is the weak solution in the sense of Definition \ref{D1}. Additionally, if $b \in W^{2+\theta, 2},\; \theta\in(0, 1]$, then  $\partial_\tau w \in L^{\infty}(Q(T))$.
\end{preposition}
\begin{proof}
One differentiates with respect to time \eqref{aa17}-\eqref{aa22} and proves for the resulting system of equations a statement as Theorem 3.1 in \cite{kumazaki2020global}. Note that the embedding $H^{1+\theta}(Q(T)) \subset C(\overline{Q(T)})$ implies 
$\partial_{\tau\tau} u \in L^{\infty}((Q(T)))$.
\end{proof}

\section{Fully discrete error analysis}\label{fully}

In this section, we present firstly a fully discrete numerical scheme of the problem \eqref{aa17}-\eqref{aa22} and then perform  the error analysis.  Let $n, M\in \mathbb{N}$. Concerning the discretization in time, we decompose the interval $(0, T]$ in to $M$ subintervals.  Let $\Delta \tau := T/M$ be a step size of the time variable. Define $\tau^n := n \Delta \tau$, for $n \in \{0, 1,2, \cdots, M\}$.  At any time level $\tau^n$, we denote $u(\tau^n, x)$ for $x\in[0, 1]$ and $h(\tau^n)$ by $u^n$ and $h^n$ respectively. Furthermore, we denote the fully discrete approximation of $u^n$ and $h^n$  by  $U^n$ and $W^n$  respectively. We use the following notations for the time derivative:
\begin{align*}
\Delta_\tau U^n := \frac{U^{n+1}- U^{n}}{\Delta \tau} \;\;\; \text{and}\;\;\;\Delta_\tau W^n := \frac{W^{n+1}- W^{n}}{\Delta \tau}. 
\end{align*} 
Using these notations, the fully discrete problem is formulated as follows: 
Find the pair $(U^{n+1},W^{n+1})\in V_k\times\mathbb{R}^+$  such that   the following system holds for all $n \in \{0, \cdots, M-1 \}$: 
\begin{align}
\nonumber\displaystyle \left( \Delta_\tau U^n,  \varphi_k \right) & -  \frac{\Delta_\tau W^n}{W^{n+1}}
 \left( y\frac{\partial U^{n+1}}{\partial y}, \varphi_k \right) + \frac{1}{(W^{n+1})^2}\left(\frac{\partial{U^{n+1}}}{\partial{y}}, \frac{\partial\varphi_k }{\partial y} \right)  \\
 &\label{b1} - \frac{1}{W^{n+1}} {\rm Bi}\left(\frac{b(\tau)}{m_{0}} - {\rm H}U^n(0)\right) \varphi_k(0) +\frac{\Delta_\tau W^n}{W^{n+1}}U^n(1)\varphi_k (1) = 0\;\; \text{for all}\;\; \varphi_k \in V_k, \\
& \label{b2}\Delta_\tau W^n =  A_0\left(U^n(1) -\frac{\sigma(W^n)}{m_0}\right),\\
& \label{b3}U^0 = U_0(0),\\
& \label{b4}W^0 = 1,
\end{align}
where $U_0(0)$ is an  appropriate approximation of the initial condition $u_0$ in $V_k$. 

\begin{lemma} \label{Lemma1} Assume that {\rm\ref{A1}-\ref{A5}} hold. Then
there exist $T^{\prime}\in (0 , T]$  and positive constants $\tilde{K}, \underbar{u},\; \bar{u}$ and $\bar{W}$ such that for all $ n\in \mathbb{N}$ the following inequalities hold true:
\begin{enumerate}[label = {\rm (\roman*)}]
\item \label{third} $W^0 \leq W^n \leq \bar{W}$,
\item  \label{fourth} $0<\underbar{u}\leq U^n \leq \bar{u}$,
\item \label{second}$0 \leq |\Delta_\tau W^n| \leq \tilde{K}$.
\end{enumerate}

\end{lemma}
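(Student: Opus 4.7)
The plan is to prove all three bounds simultaneously by induction on $n$, since the inequalities for $U^n$, $W^n$, and $\Delta_\tau W^n$ are coupled through the scheme \eqref{b1}--\eqref{b2}. The base case $n=0$ is handled by choosing $\underbar{u}:=\sigma(0)$, $\bar{u}:=b^*\mathrm{H}^{-1}$ in accordance with assumption \ref{A5}, together with $W^0=1$; the quantity $T'\in(0,T]$ will emerge during the induction as the largest time such that $W^n$ remains below $r_\sigma$ (or, failing that, below $L$), so that the monotonicity properties in \ref{A4} can be invoked throughout.

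For the inductive step, I would first deal with the $W$-bounds and the quotient $\Delta_\tau W^n$ since they follow from the explicit discretization \eqref{b2}. Assuming (ii) at step $n$, the bound (iii) is immediate: one uses the Lipschitz character of $\sigma$ together with $\|\sigma\|_{\infty}\leq c_0$ from \ref{A4} to obtain $|\Delta_\tau W^n|\leq A_0(\bar u+c_0/m_0)=:\tilde K$. The upper bound $W^{n+1}\leq \bar W$ is then a telescoping consequence $W^{n+1}\leq W^0+T'\tilde K$, and choosing $T'$ small enough ensures $\bar W<\min\{r_\sigma,L\}$. The lower bound $W^{n+1}\geq W^0$ requires showing $\Delta_\tau W^n\geq 0$; using that $U^n(1)\geq \underbar{u}=\sigma(0)\geq \sigma(W^n)/m_0$ when $\sigma$ is normalized appropriately, the sign is controlled via the monotonicity of $\sigma$ on $(0,r_\sigma)$ inherited from \ref{A4} and the already established upper bound $W^n<r_\sigma$.

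The main obstacle is establishing the pointwise bounds (ii) on $U^{n+1}$, because these cannot be read off directly from \eqref{b1}; they require a discrete maximum-principle argument. My plan is to rearrange \eqref{b1} into an elliptic problem for $U^{n+1}$ with source data built from $U^n$ and the boundary terms, and to test with the truncated function $\varphi_k=I_k(U^{n+1}-\bar u)^+$ (respectively $\varphi_k=I_k(\underbar{u}-U^{n+1})^+$), which belongs to $V_k$ since the Lagrange interpolant of a continuous piecewise linear function preserves positivity at nodes. After expansion, the diffusion term produces a nonnegative contribution $\frac{1}{(W^{n+1})^2}\|\partial_y\varphi_k\|^2$; the convective term $-\frac{\Delta_\tau W^n}{W^{n+1}}(y\,\partial_y U^{n+1},\varphi_k)$ is absorbed using the bound on $\Delta_\tau W^n$ from (iii), Young's inequality \eqref{a3}, and the trace inequality derived from \eqref{a33}; the boundary contributions at $y=0$ and $y=1$ are nonpositive when $U^n(0)\leq b^*\mathrm{H}^{-1}$ and $U^n(1)\leq \bar u$, which is precisely the inductive hypothesis. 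This forces $\varphi_k\equiv 0$, and thus $U^{n+1}\leq \bar u$; an analogous test with the negative part together with $\beta(b(\tau)/m_0-\mathrm{H}\underbar{u})\geq 0$ delivers $U^{n+1}\geq \underbar{u}$.

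The delicate point, which I expect to be the hardest technical step, is the handling of the convective term $-\frac{\Delta_\tau W^n}{W^{n+1}}(y\,\partial_y U^{n+1},\varphi_k)$: it is not of definite sign, and absorbing it into the diffusion term requires $\Delta\tau$ (hidden inside $\Delta_\tau W^n$ via \eqref{b2}) together with $T'$ to be small enough that the elliptic part stays coercive. This CFL-like smallness condition, combined with the constraint $\bar W<\min\{r_\sigma,L\}$, is what ultimately fixes the admissible $T'\in(0,T]$, and closes the induction.
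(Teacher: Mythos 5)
Your proposal attempts considerably more than the paper itself does: the paper's ``proof'' of this lemma is only a sketch --- property \ref{third} is declared to be built into the solution concept (cf. Definition \ref{D2}), \ref{second} is deduced from \ref{third} and \ref{fourth} exactly as you do, and \ref{fourth} is not proved at all but only motivated by confidence in a weak maximum principle, with a pointer to the discrete-maximum-principle literature. So an honest inductive proof would be a genuine improvement. However, as written your induction does not close, for two concrete reasons. First, the lower bound $W^{n+1}\geq W^0$ hinges on $\Delta_\tau W^n\geq 0$, i.e. on $U^n(1)\geq \sigma(W^n)/m_0$, and you justify this by $U^n(1)\geq \underbar{u}=\sigma(0)\geq \sigma(W^n)/m_0$ ``when $\sigma$ is normalized appropriately''. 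Assumptions \ref{A4}--\ref{A5} do not give this: since $W^n\geq W^0=1>0$ and $\sigma^{\prime}>0$ on $(0,r_\sigma)$, monotonicity yields $\sigma(W^n)\geq\sigma(0)$, i.e. the inequality you need points the wrong way unless one imposes an extra relation between $\sigma$, $m_0$ and the data; the bound $\sigma\leq c_0<2\sigma(0)$ does not rescue it either. The forward motion of the discrete interface is thus an unproven (and data-dependent) claim, not a consequence of the stated hypotheses.

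Second, the discrete maximum-principle step is not valid in the form you describe. Testing \eqref{b1} with $\varphi_k=I_k(U^{n+1}-\bar u)^+$ controls the stiffness term in 1D, but the time-difference term uses the consistent $P_1$ mass matrix, and on elements where $U^{n+1}-\bar u$ changes sign the quantity $\bigl(U^{n+1}-\bar u,\,I_k(U^{n+1}-\bar u)^+\bigr)$ can be negative; hence you cannot conclude a sign for $\bigl(\Delta_\tau U^n,\varphi_k\bigr)$ from the inductive hypothesis alone. Moreover, once you absorb the convective term $-\frac{\Delta_\tau W^n}{W^{n+1}}\bigl(y\,\partial_y U^{n+1},\varphi_k\bigr)$ by Young's inequality you are left with an $L^2$-type inequality with positive right-hand side, not with the conclusion $\varphi_k\equiv 0$; obtaining a genuine discrete maximum principle here requires additional structural conditions (mass lumping or an M-matrix property of the combined matrix, together with mesh/time-step restrictions of the type discussed in the reference the paper cites), none of which you impose. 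A minor but related slip: in the rescaled problem the Robin term reads ${\rm Bi}\bigl(b(\tau)/m_0-{\rm H}U^n(0)\bigr)$, so the natural upper barrier is $b^*/(m_0{\rm H})$ rather than $b^*{\rm H}^{-1}$, and the sign of the boundary contribution at $y=0$ also involves $b_*$; as stated, your choice of $\bar u$ does not make that boundary term nonpositive.
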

\begin{proof}
The property (i) is built in the concept of solution cf. Definition \ref{D2}, while (iii) is a direct consequence of (i) and (ii). The fact that the solution to our problem satisfies a weak maximum principle makes us confident that (ii) holds as well. One way to prove such statement directly would be  to ensure that a discrete maximum principle holds in our situation; we refer for instance to the working techniques used in \cite{farago2012discrete}.
\end{proof}

\begin{preposition} \label{propo}
Assume that {\rm\ref{A1}-\ref{A5}} hold. Then for sufficiently small $\Delta \tau$   the solution $(U^n, W^n)$ of \eqref{b1}-\eqref{b4} satisfies the following estimates:
\begin{enumerate}[label = {\rm (\roman*)}]
\item  \label{Energy}   $\displaystyle\max_{1\leq n \leq M} \|U^n\|_{L^2(0, 1)} ^2 + \Delta \tau \sum_{j=1}^M \left\Vert \frac{\partial U^j}{\partial y}\right\Vert^2_{L^2(0,1)}d \tau \leq K^*$.\\
\item \label{first} $  |W^n| \leq K_1(K^*, A_0, W^0) \;\;\text{for}\;\;  n \in \{1,2, \cdots, M\}$. 
\end{enumerate}
The constants $K^*$ and $K_1(K^*, A_0, W_0)$ depend on data, parameters, as well as on the quality of the approximation of the term $\|U^0\|^2  + \left\lVert \displaystyle \frac{\partial U^0 }{\partial y}\right\rVert^2$.
\end{preposition}
\begin{proof}
Taking $\varphi_k = U^{n+1}$ in \eqref{b1} yields
\begin{align}
  \nonumber\displaystyle \left( \Delta_\tau U^n,  U^{n+1} \right) & -  \frac{\Delta_\tau W^n}{W^{n+1}}
 \left( y\frac{\partial U^{n+1}}{\partial y}, U^{n+1} \right) + \frac{1}{(W^{n+1})^2}\left(\frac{\partial{U^{n+1}}}{\partial{y}}, \frac{\partial U^{n+1} }{\partial y} \right)\\
 &\label{3b11} - \frac{1}{W^{n+1}} {\rm Bi}\left(\frac{b(\tau)}{m_{0}} - {\rm H}U^n(0)\right) U^{n+1}(0) +\frac{\Delta_\tau W^n}{W^{n+1}}U^n(1)U^{n+1}(1) =0.
\end{align}
By using $2(b-a, b) = b^2 - a^2 + (b-a)^2$ and Cauchy–Schwarz's inequality, we get
\begin{align*}
   \nonumber  \frac{1}{2\Delta \tau} (W^{n+1})^2&\left( \| U^{n+1}\|^2 - \| U^n\|^2\right)  + \left\lVert \nonumber \frac{\partial U^{n+1}}{\partial y} \right\lVert^2 \\
    \nonumber  \leq& |W^{n+1}| |\Delta_{\tau}W^n| \left\lVert \frac{\partial U^{n+1}}{\partial y} \right\lVert \left\lVert U^{n+1} \right\lVert
    + {\rm Bi}|W^{n+1}| \frac{b^*}{m_0} |U^{n+1}(0)| \\
    &+ {\rm Bi H}|W^{n+1}| |U^n(0)||U^{n+1}(0)| + |W^{n+1}| |\Delta_{\tau}W^n|   |U^n(1)||U^{n+1}(1)|\\
     \leq& c_{\xi} \bar{W}^2\tilde{K}^2   \| U^{n_+1}\|^2 + \xi \left\lVert \frac{\partial U^{n+1}}{\partial y}\right\rVert^2
    + {\rm Bi} \bar{W} \frac{b^*}{m_0} \| U^{n+1} \|_{H^1} \\
    &+ {\rm Bi H}\bar{W} |U^n(0)||U^{n+1}(0)| + \bar{W} \tilde{K}  |U^n(1)||U^{n+1}(1)|\\
    \leq& c_{\xi} \bar{W}^2\tilde{K}^2   \| U^{n_+1}\|^2 + \xi \left\lVert \frac{\partial U^{n+1}}{\partial y}\right\rVert ^2
    + c_{\xi}{\rm Bi^2} \bar{W}^2 \frac{(b^*)^2}{m_0^2} + \xi \| U^{n+1} \|^2 \\ 
    & +  \xi \left\lVert \frac{\partial U^{n+1}}{\partial y}\right\rVert^2 + ({\rm Bi\; H}+\tilde{K})\bar{W}\hat{c}  \left\lVert U^n \right\lVert^{\theta}  \left\lVert  U^n \right\lVert_{H^1(0, 1)}^{1-\theta}  \left\lVert U^{n+1} \right\lVert^{\theta}  \left\lVert  U^{n+1} \right\lVert_{H^1(0, 1)}^{1-\theta}\\
    \leq & c_{\xi} \bar{W}^2\tilde{K}^2   \| U^{n_+1}\|^2 + \xi \left\lVert \frac{\partial U^{n+1}}{\partial y}\right\rVert ^2
    + c_{\xi}{\rm Bi^2} \bar{W}^2 \frac{(b^*)^2}{m_0^2} + \xi \| U^{n+1} \|^2 \\ 
    & +  \xi \left\lVert \frac{\partial U^{n+1}}{\partial y}\right\rVert^2 + ({\rm Bi H}+ \tilde{K})\bar{W} 
 \frac{\hat{c}}{2}  \left(\left\lVert U^n \right\lVert \left\lVert  U^n  \right\lVert_{H^1(0, 1)} + \left\lVert U^{n+1} \right\lVert  \left\lVert  U^{n+1} \right\lVert_{H^1(0, 1)}\right)\\
 \leq& c_{\xi} \bar{W}^2\tilde{K}^2   \| U^{n_+1}\|^2 + \xi \left\lVert \frac{\partial U^{n+1}}{\partial y}\right\rVert^2
    + c_{\xi}{\rm Bi^2} \bar{W}^2 \frac{(b^*)^2}{m_0^2} + \xi \| U^{n+1} \|^2 +  \xi \left\lVert \frac{\partial U^{n+1}}{\partial y}\right\rVert^2\\ 
    &  + ({\rm Bi H}\bar{W}  +  \tilde{K})\bar{W} \frac{\hat{c}}{2}  \left((\zeta+ c_{\zeta}) \left\lVert U^n \right\lVert^2 + \zeta  \left\lVert \frac{\partial U^n}{\partial y}  \right\lVert^2 + (\xi+ c_{\xi}) \left\lVert U^{n+1} \right\lVert^2 + \xi  \left\lVert \frac{\partial U^{n+1}}{\partial y} \right\lVert^2 \right)\\
    \leq& C_1(\zeta, c_{\zeta})\| U^n\|^2 + C_2 (\zeta)\left\lVert \frac{\partial U^n}{\partial y}\right\lVert^2 + C_3(\xi, c_{\xi})\| U^{n+1}\|^2 + C_4 (\xi) \left\lVert \frac{\partial U^{n+1}}{\partial y}\right\lVert^2 + C_5(c_{\xi}).
\end{align*}
Adding $(1-C_4(\xi) - C_2(\zeta))\left\lVert  \frac{\partial U^{n}}{\partial y} \right\lVert^2$ on both sides and multiplying by $2\Delta t$ gives
\begin{align}
    \nonumber    (1- 2\Delta \tau C_1) &\| U^{n+1}\|^2  + 2\Delta \tau (1- C_4(\xi))\left\lVert  \frac{\partial U^{n+1}}{\partial y} \right\lVert^2 + 2\Delta \tau(1- C_4(\xi) - C_2(\zeta))\left\lVert  \frac{\partial U^{n}}{\partial y} \right\lVert^2 \\
   \label{3ca} & \leq (1+ 2\Delta \tau C_1(\xi, c_{\xi}))\| U^n\|^2 + 2\Delta \tau (1- C_4(\xi))\left\lVert  \frac{\partial U^{n}}{\partial y} \right\lVert^2 + 2\Delta \tau C_5,
\end{align}
 where $C_1:= \max\{C_1(\xi, c_{\xi}), C_3(\zeta, c_{\zeta})\}$.
 With the notation: $c_n:= \| U^{n} \|^2$ and $d_n:=\left\lVert  \frac{\partial U^{n}}{\partial y} \right\lVert^2$,
the inequality \eqref{3ca} can be written as follows:
\begin{align}\label{sum}
  c_{n+1} + a_1  d_{n+1} + a_2 d_{n} &\leq a_3 c_{n} + a_1 d_{n} + a_4,
\end{align}
where
\begin{align*}
a_1 := \frac{2\Delta \tau (1-C_4)}{1- 2\Delta \tau C_1},\; a_2 := \frac{2\Delta \tau (1-C_4 -C_2)}{1- 2\Delta \tau C_1},\;  a_3 := \frac{1+ 2 \Delta \tau C_1}{1- 2\Delta \tau C_1},\; a_4 := \frac{2 \Delta \tau C_5}{1 - 2\Delta \tau C_1}.
\end{align*}
Using $a_3>1$ in \eqref{sum}, we get the following inequalities:
\begin{align}
\label{3c3}c_{n+1} + a_1  d_{n+1} + a_2 d_{n} &\leq a_3 ( c_{n} + a_1 d_{n} )+ a_4,\\
\label{3c4}a_3(c_n + a_1 d_n) + a_3 a_2 d_{n-1} &\leq a_3^2  (c_{n-1} + a_1 d_{n-1} )+ a_3 a_4,\\
\label{3c5}a_3^2(c_{n-1} + a_1 d_{n-1}) + a_3^2 a_2 d_{n-2} &\leq a_3^3  (c_{n-2} + a_1 d_{n-2} )+ a_3^2 a_4,\\
\nonumber&\vdots\\
\label{3c6}a_3^n(c_{1} + a_1 d_{1}) + a_3^n a_2 d_{0} &\leq a_3^{n+1}  (c_{0} + a_1 d_{0} )+ a_3^n a_4.
\end{align}
Adding $\eqref{3c3}-\eqref{3c6}$, we obtain
\begin{align*}
c_{n+1} + a_1  d_{n+1} + a_2 [d_{n} + a_3 d_{n-1} + a_3^2 d_{n-2} + \cdots +  a_3^n d_0] \leq a_3^{n+1}  (c_{0} + a_1 d_{0} )+ a_4  [1 + a_3 + a_3^2+ \cdots + a_3^n ].
\end{align*}
For sufficiently small $\Delta \tau$ and  $n\in \{0, 1, \cdots, M-1 \}$,  we can write
\begin{align}
 \label{bb9}\| U^{n+1} \|^2   +  a_2  \sum_{j=1}^{n+1}  \left\lVert \frac{\partial U^{j} }{\partial y}  \right\lVert^2 \leq a_3^{n+1}  \left(\|U^0\|^2  + a_1  \left\lVert \frac{\partial U^0 }{\partial y}  \right\lVert^2 \right)+ a_4  \sum_{j=0}^{n}  a_3^j .
\end{align}
It follows from \eqref{bb9}  
\begin{align*}
 \max_{1\leq n \leq M }\| U^{n} \|^2   +  a_2 \sum_{j=1}^{M}  \left\lVert \frac{\partial U^{j} }{\partial y}  \right\lVert^2 \leq a_3^{M}  \left(\|U^0\|^2  + a_1  \left\lVert \frac{\partial U^0 }{\partial y}  \right\lVert^2 \right)+ a_4  \sum_{j=0}^{M-1}  a_3^j.
\end{align*}
This completes the proof of \ref{Energy}. 
To prove \ref{first}, we use  \eqref{b2} to get
\begin{align*}
&W^{n+1} = W^n + \Delta \tau  A_0 \left(U^n(1) - \frac{\sigma(W^n)}{m_0} \right).
\end{align*}
Furthermore, we get 
\begin{align*}
|W^{n+1}| &\leq |W^n| + \Delta \tau  A_0 \left( |U^n(1)| + \frac{|\sigma(W^n)|}{m_0} \right)\\
& \leq |W^n| + \Delta \tau  A_0 \left( \hat{c} \|U^n\|^{\theta} \left\Vert U^n\right\Vert_{H^1(0,1)}^{1 - \theta}  + \frac{|\sigma(W^n)|}{m_0}   \right)\\
& \leq |W^n| + K(K^*, A_0).
\end{align*}
Repeating the same procedure to bound $|W^n|$  gives 
\begin{align}
| W^n| \leq K(K^*, A_0) + |W^0| \;\;\text{for}\;\;  n\in \{ 1,2, \cdots, M\}.
\end{align}
\end{proof}
\begin{remark}
The statement \ref{Energy} in Proposition \ref{propo} can be seen as the discrete version of \eqref{energy}. It provides  information regarding the stability of our fully discrete scheme. It turns out that the result holds for a sufficiently small $\Delta \tau$ for a fixed $n$.  The restriction on time step size indicates the our discretization scheme is possibly conditionally stable.  
\end{remark}
We now discuss the existence of solution to the fully discrete problem \eqref{b1}-\eqref{b4}.
\begin{theorem}
Assume  {\rm\ref{A1}-\ref{A5}} hold. Then there exists a solution for the  fully discrete problem \eqref{b1}-\eqref{b4}. 
\end{theorem}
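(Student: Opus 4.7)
The plan is to proceed by induction on $n$, starting from the initialization \eqref{b3}--\eqref{b4}. The key observation is that the fully discrete system decouples at each step: equation \eqref{b2} yields an explicit update for the interface position,
\[
W^{n+1} = W^n + \Delta\tau\, A_0\!\left(U^n(1) - \frac{\sigma(W^n)}{m_0}\right),
\]
which depends only on the already-computed pair $(U^n, W^n)$. Because \ref{A4} makes $\sigma$ bounded and $U^n\in V_k$ is pointwise bounded on $[0,1]$, choosing $\Delta\tau$ sufficiently small keeps $W^{n+1}>0$ and $|\Delta_\tau W^n|$ uniformly bounded, in agreement with the bounds of Lemma \ref{Lemma1}. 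It then remains to show that, with $W^{n+1}$ in hand, equation \eqref{b1} admits an element $U^{n+1}\in V_k$.

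Rewriting $\Delta_\tau U^n = (U^{n+1}-U^n)/\Delta\tau$ and collecting the $U^{n+1}$-terms on the left, \eqref{b1} takes the variational form $a(U^{n+1},\varphi_k) = L(\varphi_k)$ for all $\varphi_k\in V_k$, where
\[
a(u,v) := \frac{1}{\Delta\tau}(u,v) - \frac{\Delta_\tau W^n}{W^{n+1}}\!\left(y\frac{\partial u}{\partial y},v\right) + \frac{1}{(W^{n+1})^2}\!\left(\frac{\partial u}{\partial y},\frac{\partial v}{\partial y}\right),
\]
and $L$ is a continuous linear functional gathering the known contributions from $U^n$, $b(\tau^n)$ and $W^{n+1}$. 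Since $V_k$ is finite-dimensional, existence (and uniqueness) of $U^{n+1}$ is equivalent to the stiffness matrix associated with $a$ being nonsingular, which follows at once from coercivity of $a$ on $V_k$.

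To establish coercivity, I would test $a$ with $v=u$ and integrate by parts in the drift term via the identity $(y\,\partial_y u,u) = \tfrac12 u(1)^2 - \tfrac12\|u\|^2$, leading to
\[
a(u,u) = \left(\frac{1}{\Delta\tau}+\frac{\Delta_\tau W^n}{2 W^{n+1}}\right)\|u\|^2 - \frac{\Delta_\tau W^n}{2 W^{n+1}}\, u(1)^2 + \frac{1}{(W^{n+1})^2}\left\|\frac{\partial u}{\partial y}\right\|^2.
\]
The boundary contribution in $u(1)^2$ is the main obstacle, since the sign of $\Delta_\tau W^n$ is not known a priori. I would control it by means of the one-dimensional interpolation inequality stated just after \eqref{a33},
\[
u(1)^2 \leq \|u\|_\infty^2 \leq \hat{c}\!\left(\xi\left\|\tfrac{\partial u}{\partial y}\right\|^2 + (\xi+c_\xi)\|u\|^2\right),
\]
together with the bounds $W^0\leq W^{n+1}\leq \bar W$ and $|\Delta_\tau W^n|\leq \tilde K$ granted by Lemma \ref{Lemma1}. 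Choosing $\xi$ small enough absorbs the gradient contribution into the $\|\partial_y u\|^2/(W^{n+1})^2$ term, and then choosing $\Delta\tau$ small enough makes the coefficient in front of $\|u\|^2$ strictly positive. This yields $a(u,u)\geq c\,\|u\|_{H^1(0,1)}^2$ for a constant $c>0$ independent of $u$, so the Lax--Milgram lemma delivers a unique $U^{n+1}\in V_k$ solving \eqref{b1}, closing the induction step.
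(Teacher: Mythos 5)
Your argument is correct, but it takes a genuinely different route from the paper. You exploit the fact that the scheme \eqref{b1}--\eqref{b2} is only semi-implicit: $W^{n+1}$ is given by an explicit update, and all nonlinear and boundary contributions in \eqref{b1} are lagged to level $n$, so the equation for $U^{n+1}$ is an affine-linear variational problem on the finite-dimensional space $V_k$. Your identity $(y\,\partial_y u,u)=\tfrac12 u(1)^2-\tfrac12\|u\|^2$ is correct, and absorbing the boundary term via the interpolation inequality following \eqref{a33}, with the uniform bounds $W^0\leq W^{n+1}\leq \bar W$ and $|\Delta_\tau W^n|\leq \tilde K$ from Lemma \ref{Lemma1}, does give coercivity for $\xi$ and then $\Delta\tau$ sufficiently small, hence existence (and, as a bonus, uniqueness) of $U^{n+1}$ by Lax--Milgram, or simply by nonsingularity of the positive-definite stiffness matrix. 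The paper instead keeps the problem in the form $F(\zeta)=0$ for the Galerkin coefficients and invokes the Brouwer fixed-point corollary (continuous $F$ with $(F(\zeta),\zeta)>0$ on a large sphere has a zero), estimating $(F(\zeta),\zeta)$ from below with the same integration-by-parts and the bounds of Lemma \ref{Lemma1}; uniqueness is then proved separately in the next theorem. What your approach buys is economy: linearity makes the topological fixed-point machinery unnecessary, the explicit coercivity condition quantifies the time-step restriction, and existence and uniqueness come in one stroke. What the paper's approach buys is robustness: the Brouwer argument would survive essentially unchanged if the boundary nonlinearity $\beta$ or $\sigma$ were treated implicitly at level $n+1$, where your linear reduction would no longer apply. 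Note also that both proofs lean on the pointwise bounds of Lemma \ref{Lemma1}, so you are on the same footing as the paper in that respect.
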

\begin{proof} Starting from $W^0$ and $U^0$,  the existence of $W^{1}$ comes from the existence of $\sigma(W^0)$. For the existence of the solution for the concentration profile $U^{n+1}$, we use the application of the Brouwer fixed point theorem. We refer the reader to p. 206 in \cite{kesavan1989topics}, Lemma 4.2 in \cite{alam2021existence}, as well as  to Lemma 1.4 in \cite{temam2001navier} for more functional analytic details. 
The space $V_k$ is a separable as a subspace of $H^1(0,1)$. Let $\{w_1, w_2, \cdots, \}$ be an orthogonal basis of $H^1(0, 1)$ and an orthonormal basis for $L^2(0,1)$.
For each fixed integer $m\geq 1$, we define an approximation solution $U^{n+1}$ of \eqref{b1} by
\begin{align}
 \label{lin} U^{n+1}:= \sum_{i=1}^{m} \zeta_i w_i.
\end{align}
We define $F:\mathbb{R}^m \rightarrow \mathbb{R}^m $ by

\begin{align}
(F(\zeta))_i :=  \nonumber\displaystyle \left( \Delta_\tau U^n,  w_i \right) & -  \frac{\Delta_\tau W^n}{W^{n+1}}
 \left( y\frac{\partial U^{n+1}}{\partial y}, w_i \right) + \frac{1}{(W^{n+1})^2}\left(\frac{\partial{U^{n+1}}}{\partial{y}}, \frac{\partial w_i }{\partial y} \right)  \\
 &\label{b11} - \frac{1}{W^{n+1}} {\rm Bi}\left(\frac{b(\tau)}{m_{0}} - {\rm H}U^n(0)\right) w_i(0) +\frac{\Delta_\tau W^n}{W^{n+1}}U^n(1)w_i(1),
\end{align}
where $U^{n+1}$ is defined in \eqref{lin}. Thus, \eqref{b11} will have a solution if there exists a $\zeta$ such that $F(\zeta) = 0$. 
\begin{align}
  \nonumber \left( F(\zeta), \zeta\right) &=  \sum_{i=1}^{m} (F(\zeta))_i \zeta_i\\
  &= \nonumber\displaystyle \left( \Delta_\tau U^n,  U^{n+1} \right)  -  \frac{\Delta_\tau W^n}{W^{n+1}}
 \left( y\frac{\partial U^{n+1}}{\partial y}, U^{n+1} \right) + \frac{1}{(W^{n+1})^2}\left(\frac{\partial{U^{n+1}}}{\partial{y}}, \frac{\partial U^{n+1} }{\partial y} \right)\\
 &\label{1b11} - \frac{1}{W^{n+1}} {\rm Bi}\left(\frac{b(\tau)}{m_{0}} - {\rm H}U^n(0)\right) U^{n+1}(0) +\frac{\Delta_\tau W^n}{W^{n+1}}U^n(1)U^{n+1}(1).
\end{align}
By using $2(b-a, b) = b^2 - a^2 + (b-a)^2$ for the first term, integration by part formula for the second term on right hand side of \eqref{1b11}, we get,
\begin{align}
\nonumber    \left( F(\zeta), \zeta\right) \geq & \frac{1}{2 \Delta \tau} \left( \| U^{n+1}\|^2 - \| U^n\|^2\right) - \frac{\Delta_\tau W^n}{W^{n+1}} \left( (U^{n+1})^2(1) - (U^{n+1})^2(0) \right)\\
  \nonumber  & + \frac{1}{(W^{n+1})^2}\left\lVert \frac{\partial  U^{n+1} }{\partial y}\right\lVert^2 -  \frac{1}{W^{n+1}} {\rm Bi}\frac{b^*}{m_{0}} U^{n+1}(0)\\
  \nonumber & +  \frac{1}{W^{n+1}}{\rm H}\,{\rm Bi}\, U^n(0) U^{n+1}(0) + \frac{\Delta_\tau W^n}{W^{n+1}} U^n(1) U^{n+1}(1). 
  \end{align}
  Using Lemma \ref{Lemma1}, we obtain
  \begin{align}
  \nonumber \left( F(\zeta), \zeta\right) 
    \geq & \frac{1}{2 \Delta \tau} \left( \| U^{n+1}\|^2 - \| U^n\|^2\right) - \frac{1}{\Delta \tau}\left( 1 - \frac{W^n}{W^{n+1}}\right) \bar{u}^2\\
 \nonumber   &  -  \frac{1}{W^{n+1}} {\rm Bi}\frac{b^*}{m_{0}}\bar{u} + \frac{1}{W^{n+1}}{\rm H}\,{\rm Bi}\, (\underbar{u})^2 + \frac{\Delta_\tau W^n}{W^{n+1}}(\underbar{u})^2\\
\nonumber  \geq & \frac{1}{2 \Delta \tau} \left( \| U^{n+1}\|^2 - \| U^n\|^2\right) - \frac{1}{\Delta \tau}\left( 1 - \frac{W^n}{W^{n+1}}\right) \bar{u}^2 -  \frac{1}{W^{n+1}} {\rm Bi}\frac{b^*}{m_{0}}\bar{u}\\
 \nonumber \geq & \frac{1}{2\Delta\tau} \| U^{n+1}\|^2 - \left\{\frac{1}{2\Delta\tau} \left( \| U^{n}\|^2 + \left( 1 - \frac{W^n}{W^{n+1}}\right)(\bar{u})^2\right) + \frac{1}{W^{n+1}} {\rm Bi} \frac{b^*}{m_0}\bar{u}\right\}. 
\end{align}
We choose $ \| U^{n+1}\|^2 = R^2$ large enough such that $\left( F(\zeta), \zeta\right)>0$. This completes the proof. 
\end{proof}
\begin{theorem}
Assume that {\rm\ref{A1}--\ref{A5}} hold. Then the problem \eqref{b1}-\eqref{b4} admits an unique solution.
\end{theorem}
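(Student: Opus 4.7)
The plan is to proceed inductively in $n$: assuming $(U^n, W^n)$ is the unique pair produced by the scheme up to step $n$, I show that $(U^{n+1}, W^{n+1})$ is uniquely determined. The update for $W^{n+1}$ is immediate, since rewriting \eqref{b2} gives the fully explicit recursion
\begin{align*}
W^{n+1} = W^n + \Delta \tau\, A_0\Bigl(U^n(1) - \frac{\sigma(W^n)}{m_0}\Bigr),
\end{align*}
so $W^{n+1}$ is uniquely determined by $U^n$ and $W^n$. The task therefore reduces to proving that the variational problem \eqref{b1} for $U^{n+1}\in V_k$, with all coefficients now fixed, has at most one solution.

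Suppose $U_1^{n+1}$ and $U_2^{n+1}$ both satisfy \eqref{b1} and set $\bar U := U_1^{n+1} - U_2^{n+1}$. All terms involving only $U^n$, $W^n$, $W^{n+1}$, or the datum $b(\tau)$ cancel upon subtraction, leaving the purely linear identity
\begin{align*}
\frac{1}{\Delta \tau}(\bar U,\varphi_k) - \frac{\Delta_\tau W^n}{W^{n+1}}\Bigl(y\frac{\partial \bar U}{\partial y},\varphi_k\Bigr) + \frac{1}{(W^{n+1})^2}\Bigl(\frac{\partial \bar U}{\partial y},\frac{\partial \varphi_k}{\partial y}\Bigr) = 0
\end{align*}
for every $\varphi_k\in V_k$. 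Testing with $\varphi_k=\bar U$ and rewriting the convective contribution via the elementary identity
\begin{align*}
\Bigl(y\frac{\partial \bar U}{\partial y},\bar U\Bigr) = \frac{1}{2}\bar U^2(1) - \frac{1}{2}\|\bar U\|^2
\end{align*}
yields an energy-type identity involving $\|\bar U\|^2$, $\|\partial_y\bar U\|^2$, and the boundary value $\bar U^2(1)$.

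To close the argument, I would control $\bar U^2(1)$ through the interpolation inequality \eqref{a33} with $\theta=1/2$, and bound the factor $\Delta_\tau W^n/W^{n+1}$, whose sign is not a priori controlled, by $\tilde K/W^0$ using parts \ref{third} and \ref{second} of Lemma \ref{Lemma1}. Choosing the Young parameter $\xi$ small enough that the interpolation contribution to $\|\partial_y\bar U\|^2$ is absorbed by the diffusion term $\|\partial_y\bar U\|^2/\bar W^2$, and then $\Delta \tau$ small enough that the contributions to $\|\bar U\|^2$ are dominated by $\|\bar U\|^2/\Delta \tau$, one arrives at an inequality of the form
\begin{align*}
\Bigl(\frac{1}{\Delta \tau} - C_1\Bigr)\|\bar U\|^2 + \Bigl(\frac{1}{\bar W^2} - C_2\Bigr)\Bigl\|\frac{\partial \bar U}{\partial y}\Bigr\|^2 \leq 0,
\end{align*}
with both bracketed coefficients positive, forcing $\bar U\equiv 0$. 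The only real obstacle is handling the convection term, whose coefficient has no a priori sign; this is precisely what forces the same kind of smallness condition on $\Delta \tau$ as the conditional stability observed in Proposition \ref{propo}.
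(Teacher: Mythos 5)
Your proposal is correct and follows essentially the same strategy as the paper: an inductive step-by-step argument in which $W^{n+1}$ is fixed first, the two variational identities for $U^{n+1}$ are subtracted, the difference $\bar U$ is used as test function, and smallness of the Young parameter and of $\Delta\tau$ yields $\bigl(\tfrac{1}{\Delta\tau}-c_\xi\bigr)\|\bar U\|^2+\bigl(\tfrac{1}{\bar W^2}-\xi\tfrac{\tilde K^2}{(W^0)^2}\bigr)\|\partial_y\bar U\|^2\le 0$ and hence $\bar U\equiv 0$. The only genuine deviations are minor and both work in your favor or are cosmetically different: for the transport term the paper simply uses that $0\le y\le 1$ and estimates $\bigl|\tfrac{\Delta_\tau W^n}{W^{n+1}}\bigl(y\,\partial_y\bar U,\bar U\bigr)\bigr|\le \tfrac{\tilde K}{W^0}\|\partial_y\bar U\|\,\|\bar U\|$ directly by Cauchy--Schwarz and Young, avoiding any boundary term, whereas you integrate by parts and then must control $\bar U^2(1)$ via the interpolation inequality \eqref{a33}; your route is valid but slightly longer, and the absorption into the diffusion term works the same way. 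Likewise, your observation that $W^{n+1}=W^n+\Delta\tau A_0\bigl(U^n(1)-\sigma(W^n)/m_0\bigr)$ is an explicit update and hence uniquely determined is a cleaner rendering of the paper's Lipschitz iteration on $|W^{n+1}-\bar W^{n+1}|$; both reduce to the same conclusion under the induction hypothesis $U^n=\bar U^n$, $W^n=\bar W^n$.
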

\begin{proof}
Assume  by contradiction that $(U^{n+1}, W^{n+1})$ and $(\bar{U}^{n+1}, \bar{W}^{n+1})$ are two solutions satisfying \eqref{b1}-\eqref{b4} with the same initial data.  Then we get the following holds for all $\varphi_k \in V_k$:
\begin{align}
\nonumber\displaystyle \left( \Delta_\tau U^n,  \varphi_k \right) & -  \frac{\Delta_\tau W^n}{W^{n+1}}
 \left( y\frac{\partial U^{n+1}}{\partial y}, \varphi_k \right) + \frac{1}{(W^{n+1})^2}\left(\frac{\partial{U^{n+1}}}{\partial{y}}, \frac{\partial\varphi_k }{\partial y} \right)  \\
 &\label{1b1} - \frac{1}{W^{n+1}} {\rm Bi}\left(\frac{b(\tau)}{m_{0}} - {\rm H}U^n(0)\right) \varphi_k(0) +\frac{\Delta_\tau W^n}{W^{n+1}}U^n(1)\varphi_k (1) = 0, \\
 \nonumber\displaystyle \left( \Delta_\tau \bar{U}^n,  \varphi_k \right) & -  \frac{\Delta_\tau \bar{W}^n}{\bar{W}^{n+1}}
 \left( y\frac{\partial \bar{U}^{n+1}}{\partial y}, \varphi_k \right) + \frac{1}{(\bar{W}^{n+1})^2}\left(\frac{\partial{\bar{U}^{n+1}}}{\partial{y}}, \frac{\partial\varphi_k }{\partial y} \right)  \\
 &\label{2b1} - \frac{1}{\bar{W}^{n+1}} {\rm Bi}\left(\frac{b(\tau)}{m_{0}} - {\rm H}\bar{U}^n(0)\right) \varphi_k(0) +\frac{\Delta_\tau \bar{W}^n}{\bar{W}^{n+1}}\bar{U}^n(1)\varphi_k (1) = 0.
\end{align}
Furthermore, we have
\begin{align}
 & \label{1b2}\Delta_\tau W^n =  A_0\left(U^n(1) -\frac{\sigma(W^n)}{m_0}\right),\\
& \label{2b2}\Delta_\tau \bar{W}^n =  A_0\left(\bar{U}^n(1) -\frac{\sigma(\bar{W}^n)}{m_0}\right).
\end{align}
We will show that these two solutions  must coincide.
We use  the method of induction to prove $U^{n+1} = \bar{U}^{n+1}$ and $W^{n+1} = \bar{W}^{n+1}$ for all $n\in \{-1, 0, 1, \cdot, \}$. Obviously, it holds for   $n = -1$.  Assume that the statement holds for an arbitrarily fixed $n\in \mathbb{N}$, i.e, $U^{n} = \bar{U}^{n} $.  It remains to show  $U^{n+1} = \bar{U}^{n+1}$ and $W^{n+1} = \bar{W}^{n+1}$. 
Subtracting \eqref{2b2} from \eqref{1b2}, and using the induction hypothesis gives  $W^{n+1} = \bar{W}^{n+1}$ i.e., $\Delta_\tau W^n = \Delta_\tau \bar{W}^n$. Indeed,
\begin{align*}
  \nonumber \frac{1}{\Delta \tau} (W^{n+1} - \bar{W}^{n+1} - (W^{n} -\bar{W}^{n}))& = \frac{A_0}{m_0} \left( \sigma(W^n)-\sigma(\bar{W}^n)\right)\\
 |W^{n+1} - \bar{W}^{n+1}| &\leq \left(1+ \frac{\Delta \tau \mathcal{L}A_0}{m_0}\right) |W^{n} -\bar{W}^{n}|.
\end{align*}
By repeating the same process, it yields
\begin{align*}
  |W^{n+1} - \bar{W}^{n+1}| \leq  \left(1+ \frac{\Delta \tau \mathcal{L}A_0}{m_0}\right)^2 |W^{n-1} -\bar{W}^{n-1}|\leq \cdots \leq \left(1+ \frac{\Delta \tau \mathcal{L}A_0}{m_0}\right)^{(n+1)} |W^{0} -\bar{W}^{0}|. 
\end{align*}
It now remains to show $U^{n+1} = \bar{U}^{n+1}$. We subtract \eqref{2b1} from \eqref{1b1} and use the induction hypothesis to obtain
\begin{align}
  \label{unique} \frac{1}{\Delta \tau} \left(U^{n+1} - \bar{U}^{n+1}, \varphi_k\right) - \frac{\Delta_\tau W^n}{W^{n+1}}  \left( y \frac{\partial}{\partial y}(U^{n+1} - \bar{U}^{n+1}), \varphi_k \right) + \frac{1}{(W^{n+1})^2} \left( \frac{\partial }{\partial y}(U^{n+1} - \bar{U}^{n+1}), \frac{\partial \varphi_k}{\partial y}\right) = 0.
\end{align}
Choosing $\varphi_k = U^{n+1} - \bar{U}^{n+1}$
in \eqref{unique}, it yields

\begin{align}
 \nonumber  \frac{1}{\Delta \tau} \left(U^{n+1} - \bar{U}^{n+1}, U^{n+1} - \bar{U}^{n+1}\right) &- \frac{\Delta_\tau W^n}{W^{n+1}}  \left( y \frac{\partial}{\partial y}(U^{n+1} - \bar{U}^{n+1}), U^{n+1} - \bar{U}^{n+1} \right) \\
  \nonumber &+ \frac{1}{(W^{n+1})^2} \left( \frac{\partial }{\partial y}(U^{n+1} - \bar{U}^{n+1}), \frac{\partial }{\partial y}(U^{n+1} - \bar{U}^{n+1})\right) = 0\\
 \nonumber \frac{1}{\Delta \tau} \left\|U^{n+1} - \bar{U}^{n+1}\right\|^2 &+ \frac{1}{(W^{n+1})^2} \left\| \frac{\partial }{\partial y}(U^{n+1} - \bar{U}^{n+1})\right\|^2\\
  \nonumber & \leq \frac{\Delta_\tau W^n}{W^{n+1}}  \left\| \frac{\partial}{\partial y}(U^{n+1} - \bar{U}^{n+1})\right\| \left\| U^{n+1} - \bar{U}^{n+1} \right\|. 
 \end{align}
 Using Lemma \ref{Lemma1} and Young's inequality, we get 
 \begin{align}
   \nonumber  \frac{1}{\Delta \tau} \left\|U^{n+1} - \bar{U}^{n+1}\right\|^2 &+ \frac{1}{\bar{W}^2} \left\| \frac{\partial }{\partial y}(U^{n+1} - \bar{U}^{n+1})\right\|^2 \\
  \nonumber &\leq \frac{\tilde{K}}{W^0}  \left\| \frac{\partial}{\partial y}(U^{n+1} - \bar{U}^{n+1})\right\| \left\| U^{n+1} - \bar{U}^{n+1} \right\| \\
  \nonumber \frac{1}{\Delta \tau} \left\|U^{n+1} - \bar{U}^{n+1}\right\|^2 &+ \frac{1}{\bar{W}^2} \left\| \frac{\partial }{\partial y}(U^{n+1} - \bar{U}^{n+1})\right\|^2 \\
 \nonumber &\leq \xi \frac{\tilde{K}^2}{(W^0)^2}  \left\| \frac{\partial}{\partial y}(U^{n+1} - \bar{U}^{n+1})\right\|^2  + c_{\xi}\left\| U^{n+1} - \bar{U}^{n+1} \right\|^2 \\
 \label{unique1}\left(\frac{1}{\Delta \tau} - c_{\xi}\right) \left\|U^{n+1} - \bar{U}^{n+1}\right\|^2 & + \left(\frac{1}{\bar{W}^2} - \xi \frac{\tilde{K}^2}{(W^0)^2} \right)\left\| \frac{\partial }{\partial y}(U^{n+1} - \bar{U}^{n+1})\right\|^2 \leq 0.
\end{align}
Choosing  $\xi \leq (\frac{W^0}{\tilde{K} \bar{W}})^2$  and $\Delta \tau \leq 1/ c_{\xi}$ in \eqref{unique1} gives $\left\|U^{n+1} - \bar{U}^{n+1}\right\|^2\leq 0$. It implies $U^{n+1} = \bar{U}^{n+1}$ a.e. in $(0,1)$. This completes the proof.
\end{proof}

We next analyze error estimates of our fully discrete scheme for the concentration profile and the position of the moving boundary. To estimate the errors $e^n:= U^n - u^n$ and $e_1^n:= W^n - h^n$,  we decompose  $e^n$ into two parts:
\begin{align}
e^n =  U^n - u^n = \psi^n + \rho ^n,
\end{align}
with $ \psi^n:= U^n - I_ku^n$ and  $ \rho ^n:= I_k u^n  - u^n$. Here $I_k u^n$ is a Lagrange interpolation of $u^n$ defined in Lemma \ref{lemma1}.
In the rest of section we derive error estimates for the fully discrete scheme \eqref{b1}-\eqref{b4}. To begin with, we perform the error bound for $\psi^{n+1}$ in the following theorem.
\begin{theorem} \label{theorem1}
	Assume  {\rm\ref{A1}-\ref{A5}} hold together with the hypothesis of Proposition \ref{higher}. Let $(u, h)$  be the corresponding weak solution to problem \eqref{aa17}-\eqref{aa22}  in the sense of Definition \ref{D1}. Let $(U^n, W^n)$ be the solution for the fully discrete formulation \eqref{b1}-\eqref{b4}.  Then there exists a constant $K >0$ such that the following inequality holds for sufficiently small $\Delta \tau$:
\begin{align}
\| \psi^{n+1} \|^2  +   | e_1^{n+1} |^2 + \alpha \Delta \tau \sum_{i=1}^{n+1}  \left\lVert \frac{\partial \psi^{j}}{\partial y}  \right\lVert^2 \leq K \{\Delta \tau ^2 + k^2\}.
\end{align}
\end{theorem}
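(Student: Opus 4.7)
The plan is to derive a joint energy identity for the interpolation-based concentration error $\psi^{n+1}$ and the moving-boundary error $e_1^{n+1}$, bound the residual terms by $O(\Delta\tau+k)$ quantities, and close the estimate via discrete Gronwall. First, I would subtract the fully discrete equation \eqref{b1} from the continuous weak formulation \eqref{a17} evaluated at $\tau=\tau^{n+1}$. Writing $\partial_\tau u^{n+1}=\Delta_\tau u^n+\eta^n$, where the backward-Euler truncation $\eta^n$ is $O(\Delta\tau)$ in $L^2$ thanks to $\partial_{\tau\tau}u\in L^\infty(Q(T))$ from Proposition \ref{higher}, and using the splitting $e^{n+1}=\psi^{n+1}+\rho^{n+1}$ with $\rho^{n+1}:=I_k u^{n+1}-u^{n+1}$, this produces an equation for $\psi^{n+1}$ whose right-hand side collects (i) the truncation $\eta^n$, (ii) interpolation residues involving $\rho$, $\partial_y\rho$ and the boundary traces $\rho(0),\rho(1)$, each controlled by Lemma \ref{lemma1}, and (iii) the coefficient-mismatch terms $\tfrac{\Delta_\tau W^n}{W^{n+1}}-\tfrac{h'(\tau^{n+1})}{h(\tau^{n+1})}$ and $\tfrac{1}{(W^{n+1})^2}-\tfrac{1}{h(\tau^{n+1})^2}$, each linearly bounded by $|e_1^{n+1}|+|e_1^n|+\Delta\tau$ using the uniform lower bound $W^0\le W^n$ from Lemma \ref{Lemma1}\ref{third} and a Taylor expansion of $h$.

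In parallel, subtracting \eqref{aa20} at $\tau^{n+1}$ from \eqref{b2} and using the Lipschitz continuity of $\sigma$ granted by assumption \ref{A4} yields the discrete recursion $|e_1^{n+1}|\le(1+C\Delta\tau)|e_1^n|+C\Delta\tau(|\psi^n(1)|+|\rho^n(1)|+\Delta\tau)$, from which squaring and summing produces a bound on $|e_1^{n+1}|^2$ in terms of $\Delta\tau^2$, $k^{2+2\theta}$ and $\Delta\tau\sum_j\|\psi^j\|_{H^1(0,1)}^2$, via Lemma \ref{lemma1}\ref{l4} for the trace of $\rho$ and \eqref{a33} for that of $\psi$.

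Next, I would test the concentration error equation against $\varphi_k=\psi^{n+1}$. Using the identity $(\Delta_\tau\psi^n,\psi^{n+1})=\tfrac{1}{2\Delta\tau}(\|\psi^{n+1}\|^2-\|\psi^n\|^2)+\tfrac{1}{2\Delta\tau}\|\psi^{n+1}-\psi^n\|^2$ and bounding the diffusive term from below by $\bar W^{-2}\|\partial_y\psi^{n+1}\|^2$ via Lemma \ref{Lemma1}\ref{third}, each right-hand contribution is split with Young's inequality \eqref{a3}. The boundary traces $\psi^{n+1}(0),\psi^{n+1}(1)$ generated by the Robin term at $y=0$ and the Stefan-type term at $y=1$ are reduced through \eqref{a33} to $\xi\|\partial_y\psi^{n+1}\|^2+(\xi+c_\xi)\|\psi^{n+1}\|^2$, a small multiple of which is absorbed on the left. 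Multiplying by $2\Delta\tau$, adding the squared boundary-error recursion, and summing over $j=0,\dots,n$ leads to an inequality of the shape
\[
\|\psi^{n+1}\|^2+|e_1^{n+1}|^2+\alpha\Delta\tau\sum_{j=1}^{n+1}\Bigl\|\frac{\partial\psi^j}{\partial y}\Bigr\|^2\le C(\Delta\tau^2+k^2)+C\Delta\tau\sum_{j=0}^{n}\bigl(\|\psi^j\|^2+|e_1^j|^2\bigr),
\]
to which a discrete Gronwall lemma applies provided $\Delta\tau$ is small enough, yielding the claimed bound.

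The main obstacle will be the simultaneous treatment of the two flux boundaries: unlike the Dirichlet setting of \cite{ahn2003error,lee2006error}, both $\psi^{n+1}(0)$ and $\psi^{n+1}(1)$ must be absorbed via \eqref{a33}, and their Young parameters have to be coordinated with the coefficient-mismatch estimates and with the $\Delta\tau$-restriction required by Gronwall so that a uniformly positive $\alpha$ remains in front of $\|\partial_y\psi^{n+1}\|^2$. Since the prefactor $\Delta_\tau W^n/W^{n+1}$ is only controlled by $\tilde K/W^0$ via Lemma \ref{Lemma1}, keeping all constants independent of $n$ in the Stefan-type trace term at $y=1$ is the core bookkeeping difficulty.
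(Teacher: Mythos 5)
Your proposal follows essentially the same route as the paper's proof: the same splitting $e^{n}=\psi^{n}+\rho^{n}$ with the Lagrange interpolant, comparison of the fully discrete equations with the continuous weak formulation at $\tau^{n+1}$, Taylor expansion of the backward-Euler truncation using $\partial_{\tau\tau}u\in L^\infty(Q(T))$ from Proposition \ref{higher}, control of the coefficient and boundary mismatches through the Lipschitz property of $\sigma$, Lemma \ref{lemma1} and the interpolation inequality \eqref{a33}, testing with $\varphi_k=\psi^{n+1}$, coupling with the recursion for $e_1^{n}$, and a discrete Gronwall closure for sufficiently small $\Delta\tau$. The only differences (keeping the coefficients in quotient form instead of clearing denominators by $(W^{n+1})^2$ and $(h^{n+1})^2$, and squaring the linear recursion for $e_1^{n+1}$ rather than multiplying by $e_1^{n+1}$) are cosmetic, so the argument is correct and matches the paper's.
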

\begin{proof}  Subtracting  \eqref{a19} from \eqref{b1}, we obtain the following identity:
\begin{align}
\nonumber\displaystyle (W^{n+1})^2 \left( \Delta_\tau U^n,  \varphi_k \right) & - (h^{n+1})^2 \left( \frac{\partial u^{n+1}}{\partial \tau},  \varphi_k \right) - W^{n+1} \Delta_\tau W^n
\left( y\frac{\partial U^{n+1}}{\partial y}, \varphi_k \right) \\
\nonumber &+ h^{n+1}  (h^{n+1})^{\prime} \left( y\frac{\partial u^{n+1}}{\partial y}, \varphi_k \right) 
  + \left(\frac{\partial{U^{n+1}}}{\partial{y}}, \frac{\partial\varphi_k }{\partial y} \right) \\
  \nonumber & - \left( \frac{\partial{u^{n+1}}}{\partial{y}},  \frac{\partial\varphi_k }{\partial y}  \right) -    W^{n+1} {\rm Bi}\left(\frac{b(\tau)}{m_{0}} - {\rm H}U^n(0)\right) \varphi_k(0) \\
  \nonumber  & +  h^{n+1} {\rm Bi}\left(\frac{b(\tau)}{m_{0}}- {\rm H}u^{n+1}(0)\right) \varphi_k(0)  + W^{n+1} \Delta_\tau W^n U^n(1)\varphi_k (1) \\
\label{3b5} & - h^{n+1} (h^{n+1})^{\prime}u^{n+1}(1)\varphi_k (1)  = 0,
\end{align}
which holds for all $\varphi_k \in V_k$.\\
Using $U^n = \psi^n + \rho ^n + u^n$ and 	arranging conveniently the terms in \eqref{3b5} yields 
\begin{align}
\nonumber\displaystyle (W^{n+1})^2 \left( \Delta_\tau \psi^n,  \varphi_k \right) +  \left(\frac{\partial{\psi^{n+1}}}{\partial{y}}, \frac{\partial\varphi_k }{\partial y} \right) =  &   W^{n+1} \Delta_\tau W^n
\left( y\frac{\partial U^{n+1}}{\partial y}, \varphi_k \right) - h^{n+1}  (h^{n+1})^{\prime} \left( y\frac{\partial u^{n+1}}{\partial y}, \varphi_k \right) \\
\nonumber & +    W^{n+1} {\rm Bi}\left(\frac{b(\tau)}{m_{0}} - {\rm H}U^n(0)\right) \varphi_k(0)\\ 
\nonumber  & -  h^{n+1} {\rm Bi}\left(\frac{b(\tau)}{m_{0}}- {\rm H}u^{n+1}(0)\right) \varphi_k(0) -  \left(\frac{\partial{\rho^{n+1}}}{\partial{y}}, \frac{\partial\varphi_k }{\partial y} \right)\\
 \nonumber & - W^{n+1} \Delta_\tau W^n U^n(1)\varphi_k (1)  +  h^{n+1}  (h^{n+1})^{\prime}u^{n+1}(1)\varphi_k (1) \\ 
\nonumber & +  (h^{n+1})^2 \left( \frac{\partial u^{n+1}}{\partial \tau},  \varphi_k \right) - \displaystyle (W^{n+1})^2 \left( \Delta_\tau(\rho ^n + u^n) ,  \varphi_k \right) \\
\label{I} = :& \sum_{\ell = 1}^5I_{\ell}, 
\end{align}
where we introduce the following notations:
\begin{align*}
&I_1 := W^{n+1} \Delta_\tau W^n
\left( y\frac{\partial U^{n+1}}{\partial y}, \varphi_k \right) - h^{n+1}  (h^{n+1})^{\prime} \left( y\frac{\partial u^{n+1}}{\partial y}, \varphi_k \right),\\
&I_2:=  W^{n+1} {\rm Bi}\left(\frac{b(\tau)}{m_{0}} - {\rm H}U^n(0)\right) \varphi_k(0)  -  h^{n+1} {\rm Bi}\left(\frac{b(\tau)}{m_{0}}- {\rm H}u^{n+1}(0)\right) \varphi_k(0),\\
&I_3:= -  \left(\frac{\partial{\rho^{n+1}}}{\partial{y}}, \frac{\partial\varphi_k }{\partial y} \right),\\
&I_4:=  - W^{n+1} \Delta_\tau W^n U^n(1)\varphi_k (1)  +  h^{n+1}  (h^{n+1})^{\prime}u^{n+1}(1)\varphi_k (1),\\
&I_5:=  (h^{n+1})^2 \left( \frac{\partial u^{n+1}}{\partial \tau},  \varphi_k \right) - \displaystyle (W^{n+1})^2 \left( \Delta_\tau(\rho ^n + u^n),  \varphi_k \right).
\end{align*}
Before proceeding further, we collect two useful estimates in following auxillary Lemma \ref{remark1}. 
\begin{lemma}
	\label{remark1}
	There exist constants $K_2 = K(A_0, m_0)>0$ and $ \hat{c}>0$ such that 
\begin{enumerate}[label = {\rm(\roman*)}] 
\item \label{re1}$\left| \Delta_\tau W^n - (h^{n+1})^{\prime} \right| \leq  K_2 \left(\left| e^n(1)\right| + \left| e_1^n \right|  +  \tilde{c} \Delta \tau   \right), $	
 \item \label{re2} $|e^n(1)| |\varphi(1)|  \leq    \hat{c} \left( (\zeta+ c_{\zeta}) ck^4\left\lVert u^n \right\lVert^2_{H^2} +  (\zeta+ c_{\zeta}) \left\lVert \psi^n \right\lVert^2 + \zeta ck^2  \left\lVert  u^n  \right\lVert^2_{H^2} \right. \\
   \left. \qquad \qquad \;\;\;\;\;\;\;\;\;+ \zeta  \left\lVert \frac{\partial \psi^n}{\partial y}  \right\lVert^2 + (\xi+ c_{\xi}) \left\lVert \varphi \right\lVert^2 + \xi  \left\lVert \frac{\partial \varphi}{\partial y} \right\lVert^2 \right).$
\end{enumerate}
\end{lemma}
\begin{proof}
 Subtracting  \eqref{a20} from \eqref{b2}, we get the following identity:
\begin{align*}
\Delta_\tau W^n - (h^{n+1})^{\prime} & = A_0\left(U^n(1) -\frac{\sigma(W^n)}{m_0}\right) -  A_0\left(u^{n+1}(1)-\frac{\sigma(h^{n+1})}{m_0}\right)\\
& = A_0 \left( U^n(1) - u^{n+1}(1)\right) - \frac{A_0}{m_0} \left( \sigma(W^n) -  \sigma(h^{n+1})\right).
\end{align*}
Using \ref{A4} yields 
\begin{align*}
\left|\Delta_\tau  W^n - (h^{n+1})^{\prime}  \right|\leq& A_0 \left(\left| U^n(1) - u^n(1)  \right| + \left| u^n(1)  - u^{n+1}(1)\right| \right)\\
& + \frac{\mathcal L}{m_0}\left(\left| W^n - h^n \right| + \left| h^n  - h^{n+1}\right| \right)\\
\leq &A_0 \left(\left| e^n(1)\right| +  \Delta \tau  \| \partial_\tau u^n(1)\|_{\infty} \right)
 + \frac{\mathcal L A_0}{m_0}\left(\left| e_1^n \right| + \Delta \tau  \| h^{\prime}\|_{\infty}\right)\\
 \leq & K_2 \left(\left| e^n(1)\right| + \left| e_1^n \right|  +   \Delta \tau {\| \partial_\tau u^n(1)\|_{\infty}} 
 + \Delta \tau  \| h^{\prime}\|_{\infty}\right),
\end{align*}
where $\mathcal L$ is the Lipschitz constant of $\sigma$ in \ref{A4} and  $K_2 := \max\{ A_0, \mathcal{L} \mathcal{A}_0/m_0\}$.  Note that it exists $\hat{c}>0$ such that for $\theta \in [1/2, 1)$ it holds $$\| \partial_\tau u^n(1)\|_{\infty} \leq \hat{c} \| \partial_\tau u^n\|^{\theta}_{L^2(0, 1)} \| \partial_\tau u^n\|^{1-\theta}_{H^1(0, 1)}\leq c $$
 as $u(\tau^n, x) = u^n(x) \in W^{1,2} ((0,T) \times (0, 1))$ (cf. Definition 3.2 in \cite{kumazaki2020global}).
 Thus, we obtain
 \begin{align}
 \left|\Delta_\tau W^n - (h^{n+1})^{\prime}  \right|\leq  K_2 \left(\left| e^n(1)\right| + \left| e_1^n \right|  +  \tilde{c} \Delta \tau   \right),
 \end{align}
 where $\tilde{c} = 2 \max \{c, \|h^{\prime}\|_{\infty}\}$.
This proves  \ref{re1}.\\  We now prove \ref{re2}. Using the interpolation inequality \eqref{a33} yields
\begin{align*}
\nonumber |e^n(1)| |\varphi_k(1)|&\leq \hat{c}  \left\lVert e^n \right\lVert^{\theta}  \left\lVert  e^n \right\lVert_{H^1(0, 1)}^{1-\theta}  \left\lVert \varphi_k \right\lVert^{1- \theta}  \left\lVert  \varphi_k \right\lVert_{H^1(0, 1)}^{1-\theta}\\
&\leq \frac{\hat{c}}{2} \left(\left\lVert e^n \right\lVert \left\lVert  e^n  \right\lVert_{H^1(0, 1)} + \left\lVert \varphi_k \right\lVert  \left\lVert  \varphi_k \right\lVert_{H^1(0, 1)}\right)\\
&\leq  \frac{\hat{c}}{2}  \left( (\zeta+ c_{\zeta}) \left\lVert e^n \right\lVert^2 + \zeta  \left\lVert \frac{\partial e^n}{\partial y}  \right\lVert^2 + (\xi+ c_{\xi}) \left\lVert \varphi_k \right\lVert^2 + \xi  \left\lVert \frac{\partial \varphi_k}{\partial y} \right\lVert^2 \right)\\
&\leq  \frac{\hat{c}}{2}  \left( (\zeta+ c_{\zeta}) \left\lVert \rho^n \right\lVert^2 +  (\zeta+ c_{\zeta}) \left\lVert \psi^n \right\lVert^2 + \zeta  \left\lVert \frac{\partial \rho^n}{\partial y}  \right\lVert^2 + \zeta  \left\lVert \frac{\partial \psi^n}{\partial y}  \right\lVert^2 +(\xi+ c_{\xi}) \left\lVert \varphi_k \right\lVert^2 + \xi  \left\lVert \frac{\partial \varphi_k}{\partial y} \right\lVert^2 \right)\\
&\leq  \hat{c} \left( (\zeta+ c_{\zeta}) \left\lVert \rho^n \right\lVert^2 +  (\zeta+ c_{\zeta}) \left\lVert \psi^n \right\lVert^2 + \zeta  \left\lVert \frac{\partial \rho^n}{\partial y}  \right\lVert^2 + \zeta  \left\lVert \frac{\partial \psi^n}{\partial y}  \right\lVert^2 + (\xi+ c_{\xi}) \left\lVert \varphi_k \right\lVert^2 + \xi  \left\lVert \frac{\partial \varphi_k}{\partial y} \right\lVert^2 \right)\\
&\leq  \hat{c} \left( (\zeta+ c_{\zeta}) ck^4\left\lVert u^n \right\lVert^2_{H^2} +  (\zeta + c_{\zeta}) \left\lVert \psi^n \right\lVert^2 + \zeta ck^2  \left\lVert  u^n  \right\lVert^2_{H^2} \right. \\
 &\left.+ \zeta  \left\lVert \frac{\partial \psi^n}{\partial y}  \right\lVert^2 + (\xi+ c_{\xi}) \left\lVert \varphi_k \right\lVert^2 + \xi  \left\lVert \frac{\partial \varphi_k}{\partial y} \right\lVert^2 \right).
\end{align*}
\end{proof}
We now estimate the terms $I_1-I_5$ as follows.\\
By adding and subtracting appropriate terms, we get
\begin{align*}
I_1 =&  W^{n+1} \Delta_\tau W^n
\left( y\frac{\partial U^{n+1}}{\partial y}, \varphi_k \right) - h^{n+1}(h^{n+1})^{\prime} \left( y\frac{\partial u^{n+1}}{\partial y}, \varphi_k \right)\\
=& (W^{n+1} - h^{n+1})\Delta_\tau W^n \left( y \frac{\partial U^{n+1}}{ \partial y}, \varphi_k \right) + h^{n+1} \left(\Delta_\tau W^n - (h^{n+1})^{\prime}\right) \left( y \frac{\partial U^{n+1}}{ \partial y}, \varphi_k \right)\\
& +  h^{n+1} (h^{n+1})^{\prime} \left( y \frac{\partial }{ \partial y}  (U^{n+1}- u^{n+1}), \varphi_k \right).
\end{align*}
Using Lemma \ref{remark1}, it yields
\begin{align*}
\nonumber |I_1| \leq& |e_1^{n+1}| |\Delta_\tau W^n|  \left\lVert \frac{\partial U^{n+1} }{\partial y} \right\lVert \left\lVert \varphi_k  \right\lVert  +  \tilde{K}_0 K_2 \left(\left| e^n(1)\right| + \left| e_1^n \right|  +  \tilde{c} \Delta \tau  \right)  \left\lVert \frac{\partial U^{n+1} }{\partial y} \right\lVert \left\lVert \varphi_k  \right\lVert  \\
&\nonumber + \tilde{K}_0 \left(\left\lVert \frac{\partial \psi^{n+1} }{\partial y}  \right\lVert+ \left\lVert  \frac{\partial \rho^{n+1} }{\partial y} \right\lVert \right) \left\lVert \varphi_k  \right\lVert. \\
 \leq &K(K^*, K_1)   \left( \xi |e_1^{n+1}|^2 + c_{\xi}  \left\lVert \varphi_k \right\lVert^2 \right)+ K( \tilde{K}_0, K^*, K_2,   \hat{c}) \left(c_{\bar{\xi}} \|\varphi_k\|^2 + (\zeta + c_{\zeta}) \bar{\xi}  \left\lVert e^n \right\lVert^{2} + \zeta \bar{\xi}  \left\lVert \frac{\partial e^n}{\partial y} \right\lVert^{2}\right) \\
& + K(\tilde{K}_0, K_2, K^*, \tilde{c})\left( \bar{\xi}  |e_1^n|^2 + c_{\bar{\xi}} \|\varphi_k\|^2 + \bar{\xi} \Delta \tau^2 +  c_{\bar{\xi}} \|\varphi_k\|^2\right) \\
& + \tilde{K}_0 \left( \xi c k^2 \| u^{n+1} \|_{H^2}^2 + 2 c_{\xi} \|\varphi_k\|^2 + \xi \left\lVert \frac{\partial \psi^{n+1} }{\partial y} \right\lVert^2 \right)  \\
 \leq & K(K^*, K_1)  \left( \xi |e_1^{n+1}|^2 + c_{\xi}  \left\lVert \varphi_k \right\lVert^2 \right) \\
 &+ K(\tilde{K}_0, K^*, K_2,   \hat{c}) \left(c_{\bar{\xi}} \|\varphi_k\|^2 + (\zeta + c_{\zeta}) \bar{\xi} ck^4  \left\lVert u^n \right\lVert^{2} + (\zeta + c_{\zeta}) \bar{\xi} \| \psi^n\|^2 + \zeta \bar{\xi} ck^2  \left\lVert u^n \right\lVert^{2} + \zeta \bar{\xi}  \left\lVert \frac{\partial \psi^n}{\partial y} \right\lVert^{2}\right)\\
& + K(\tilde{K}_0, K_1, K^*, \tilde{c})\left( \bar{\xi}  |e_1^n|^2 + c_{\bar{\xi}} \|\varphi_k\|^2 + \bar{\xi} \Delta \tau^2 +  c_{\bar{\xi}} \|\varphi_k\|^2\right)\\
& + \tilde{K}_0 \left(\xi c k^2 \| u^{n+1} \|_{H^2} + c_{\xi} \|\varphi_k\|^2 + \xi \left\lVert \frac{\partial \psi^{n+1} }{\partial y} \right\lVert^2 \right). 
\end{align*}
After re-arranging the term conveniently,  $I_2$ becomes
\begin{align}
\nonumber   I_2&= W^{n+1} {\rm Bi}\left(\frac{b(\tau)}{m_{0}} - {\rm H}U^n(0)\right) \varphi_k(0) -  h^{n+1}{\rm Bi}\left(\frac{b(\tau)}{m_{0}}- {\rm H}u^{n+1}(0)\right) \varphi_k(0)\\
 \nonumber &= {\rm Bi}\frac{b(\tau)}{m_{0}} (W^{n+1} - h^{n+1}) \varphi_k(0) - {\rm Bi}  {\rm H} (W^{n+1} - h^{n+1}) U^n(0) \varphi_k(0)\\
\nonumber  & -  {\rm Bi}  {\rm H} h^{n+1} (U^n(0) - u^{n+1}(0))\varphi_k(0).
\end{align}
It holds
\begin{align}
 \nonumber |I_2|&\leq K_3(b^*, m_0, \text{Bi}, \text{H}, \tilde{K}_0) \left( |e_1^{n+1}| |\varphi(0)| + |e^n(0)| |\varphi(0)| + |u^n(0) - u^{n+1}(0)||\varphi(0)| \right) \\
 \nonumber & \leq K_3(b^*, m_0, \text{Bi}, \text{H}, \tilde{K}_0) \left(c_{\xi} |e_1^{n+1}|^2 + (\bar{\xi} + c_{\bar{\xi}}) \xi  \left\lVert \varphi_k \right\lVert^{2} + \xi \bar{\xi}  \left\lVert \frac{\partial \varphi_k}{\partial y} \right\lVert^{2} + (\zeta+ c_{\zeta}) ck^4\left\lVert u^n \right\lVert^2_{H^2} \right. \\
 \nonumber &\left. +  (\zeta + c_{\zeta}) \left\lVert \psi^n \right\lVert^2 + \zeta ck^2  \left\lVert  u^n  \right\lVert^2_{H^2} + \zeta  \left\lVert \frac{\partial \psi^n}{\partial y}  \right\lVert^2 +(\xi + c_{\xi}) \left\lVert \varphi_k \right\lVert^2 + \xi  \left\lVert \frac{\partial \varphi_k}{\partial y} \right\lVert^2 + \xi \tilde{c} \Delta \tau^2\right).
 \end{align}
 The bound on $|I_3|$ follows from the Cauchy-Schwartz inequality,  Young's inequality and Lemma \ref{lemma1}
 \begin{align}
\nonumber |I _3| &   \leq \left| \left(\frac{\partial{\rho^{n+1}}}{\partial{y}}, \frac{\partial\varphi_k }{\partial y} \right) \right| \leq \left\lVert \frac{\partial \rho^{n+1} }{\partial y}  \right\lVert \left\lVert \frac{\partial \varphi_{k} }{\partial y}  \right\lVert \leq c_{\xi} \gamma_2^2 k^4 \|u^{n+1}\|_{H^2}^2 + \xi \left\lVert \frac{\partial \varphi_{k} }{\partial y}  \right\lVert^2.
\end{align}
To deal with $I_4$, we start by re-arranging the term in a more convenient way
\begin{align}
\nonumber I_4 &=   - W^{n+1} \Delta_\tau W^n U^n(1)\varphi_k (1)  +  h^{n+1}  (h^{n+1})^{\prime}u^{n+1}(1)\varphi_k (1)\\
\nonumber  &= -(W^{n+1} - h^{n+1})  \Delta_\tau W^n U^n(1) \varphi_k(1) - h^{n+1} (\Delta_{\tau} W^n - (h^{n+1})^{\prime})U^n(1) \varphi_k(1) \\
\nonumber &- h^{n+1} (h^{n+1})^{\prime} (U^n(1) - u^{n+1}(1)) \varphi_k(1) = I_{4,1} + I_{4,2} + I_{4,3}. 
\end{align}
Using Lemma \ref{remark1}, we obtain
\begin{align*}
|I_{4,1}|&\leq  \tilde{K}|e^{n+1}_1| |U^n(1)| |\varphi(1)| \leq \tilde{K} \hat{c}^2 |e^{n+1}_1|  \left\lVert U^n \right\lVert^{\theta}  \left\lVert  U^n \right\lVert_{H^1(0, 1)}^{1- \theta} \left\lVert \varphi_k \right\lVert^{\theta}  \left\lVert \varphi_k \right\lVert_{H^1(0, 1)}^{1-\theta} \\
&\leq K(K^*, \tilde{K},\hat{c}) \left(c_{\bar{\xi}} |e^{n+1}_1|^2 + (\xi + c_{\xi}) \bar{\xi}  \left\lVert \varphi_k \right\lVert^{2} + \xi \bar{\xi}  \left\lVert \frac{\partial \varphi_k}{\partial y} \right\lVert^{2}\right)\\
|I_{4,2}|& \leq |h^{n+1}| |\Delta_{\tau} W^n - (h^{n+1})^{\prime}| |U^n(1)| |\varphi_k(1)| \\
& \leq K(\tilde{K}_0, K_2) \left(\left| e^n(1)\right| + \left| e_1^n \right|  +   \tilde{c} \Delta \tau  \right) |U^n(1)| |\varphi_k(1)| \\
& \leq K(\hat{c}, K^*, \tilde{K}_0, K_2) \left((\zeta + c_{\zeta}) k^4\left\lVert u^n \right\lVert^2_{H^2} +  (\zeta + c_{\zeta}) \left\lVert \psi^n \right\lVert^2 + \zeta k^2  \left\lVert  u^n  \right\lVert^2_{H^2}  + \zeta  \left\lVert \frac{\partial \psi^n}{\partial y}  \right\lVert^2 \right.\\
& \left. +(\xi + c_{\xi}) \left\lVert \varphi_k \right\lVert^2 + \xi  \left\lVert \frac{\partial \varphi_k}{\partial y} \right\lVert^2 + c_{\bar{\xi}} |e^{n}_1|^2  + \bar{\xi} (\xi+ c_{\xi}) \| \varphi_k\|^2 + \bar{\xi} \xi \left\lVert \frac{\partial \varphi_k}{\partial y} \right\lVert^2 + c_{\bar{\xi}} \tilde{c}^2 \Delta \tau^2 \right)\\
|I_{4,3}|& \leq  \tilde{K}_0 |U^n(1) - u^{n+1}(1)| |\varphi_k(1)| \\
& \leq \tilde{K}_0 \left( |e^n(1)| + |u^n(1) - u^{n+1}(1)|\right)|\varphi_k(1)| \\
& \leq \tilde{K}_0 \left( |e^n(1)| + c\Delta \tau \right)|\varphi_k(1)| \\
& \leq K(\tilde{K}_0) \left( (\zeta + c_{\zeta}) k^4\left\lVert u^n \right\lVert^2_{H^2} +  (\zeta + c_{\zeta}) \left\lVert \psi^n \right\lVert^2 + \zeta k^2  \left\lVert  u^n  \right\lVert^2_{H^2}  + \zeta  \left\lVert \frac{\partial \psi^n}{\partial y}  \right\lVert^2 \right.\\
& \left. +(\xi + c_{\xi}) \left\lVert \varphi_k \right\lVert^2 + \xi  \left\lVert \frac{\partial \varphi_k}{\partial y} \right\lVert^2 + c_{\xi} c^2 \Delta \tau^2\right).
\end{align*}
 By adding and subtracting appropriate terms in $I_5$, we get
\begin{align*}
\nonumber  I_5 &=  (h^{n+1})^2 \left( \frac{\partial u^{n+1}}{\partial \tau},  \varphi_k \right) - \displaystyle (W^{n+1})^2 \left( \Delta_t(\rho ^n + u^n) ,  \varphi_k \right)\\
\nonumber &=  \left((h^{n+1})^2 -  (W^{n+1})^2 \right) \left( \frac{\partial u^{n+1}}{\partial \tau},  \varphi_k \right) \\
 & + (W^{n+1})^2 \left( \frac{\partial u^{n+1}}{\partial \tau} - \Delta_\tau u^n, \varphi_k \right) - (W^{n+1})^2 (\Delta_\tau \rho^n, \varphi_k).
\end{align*}
We now claim the following holds:
By the Taylor expansion of $u^{n+1}$  around $\tau^n$ with integral reminder yields
\begin{align}
\label{taylor}u^n = u^{n+1} - \Delta\tau \frac{\partial u^{n+1}}{\partial \tau} + \int_{\tau^{n+1}}^{\tau^{n}} (\tau^n - s) \partial_{\tau \tau} u (s)ds.
\end{align}
To prove \eqref{taylor},  the fundamental theorem of calculus gives 
\begin{align}
\label{fun}u^{n} = u^{n+1} - \int_{\tau^n}^{\tau^{n+1}} u^{\prime}(s)ds.
\end{align}
Integrating by parts in the last term of \eqref{fun} gives
\begin{align}
\label{fun2}u^{n} = u^{n+1} - \left[\tau^{n+1} \frac{\partial u^{n+1}}{\partial \tau} - \tau^{n} \frac{\partial u^{n}}{\partial \tau} -  \int_{\tau^n}^{\tau^{n+1}} u^{\prime \prime}(s) s ds \right].
\end{align}
Using  again the fundamental  theorem of calculus for the last but one term in \eqref{fun2} leads to
\begin{align*}
u^{n} = u^{n+1} - \left[\tau^{n+1} \frac{\partial u^{n+1}}{\partial \tau} - \tau^{n} \left( \frac{\partial u^{n+1}}{\partial \tau} -  \int_{\tau^n}^{\tau^{n+1}} u^{\prime \prime}(s) ds \right) -  \int_{\tau^n}^{\tau^{n+1}} u^{\prime \prime}(s) s ds \right].
\end{align*}
This proves \eqref{taylor}.
It is worth mentioning that \eqref{taylor} resembles the application of Taylor's approximation  with integral reminder for the function $u^n$ around $\tau^{n+1}$ under the assumption $u^{\prime\prime} \in L^1(\tau^n, \tau^{n+1})$ (see Theorem 1.3 in \cite{cobzas1997calculul}).  
With the help of \eqref{taylor} and Preposition \ref{higher},  we can estimate the  second term as follows:
\begin{align}
\nonumber(W^{n+1})^2 \left( \frac{\partial u^{n+1}}{\partial \tau} - \Delta_\tau u^n, \varphi_k \right) &= (W^{n+1})^2 \left( \frac{1}{\Delta \tau} \int_{\tau^n}^{\tau^{n+1}} (s- \tau^n)  \partial_{\tau \tau} u(s)ds,  \varphi_k\right) \\
\nonumber&\leq  \frac{K_1}{\Delta \tau} \left\Vert \int_{\tau^n}^{\tau^{n+1}} (s- \tau^n)  \partial_{\tau \tau} u(s)ds \right\Vert \left\Vert \varphi_k \right\Vert \\ 
\nonumber&\leq  \frac{K_1}{\Delta \tau} \left\Vert \sup_{[\tau^n, \tau^{n+1}]} |\partial_{\tau \tau} u| \int_{\tau^n}^{\tau^{n+1}} (s- \tau^n) ds \right\Vert \left\Vert \varphi_k \right\Vert \\ 
\nonumber & \leq K_1 |\Omega|^{\frac{1}{2}} \frac{\Delta \tau}{2}   \sup_{[\tau^n, \tau^{n+1}]} |\partial_{\tau \tau} u| \| \varphi_k\|\\
 \nonumber & \leq K_1 \mathcal{K}_0 \Delta \tau \| \varphi_k\|\\
\label{3a57}& \leq   \xi \| \varphi_k\|^2 + c_{\xi} K_1^2 \mathcal{K}_0^2 \Delta \tau^2.
\end{align}
The last but one  term in $I_5$ can be estimated as follows:
\begin{align}
\nonumber | (W^{n+1})^2 (\Delta_\tau \rho^n, \varphi_k)| &\leq \frac{K_1}{\Delta \tau} \left|\left(\rho^{n+1}-\rho^n, \varphi_k \right) \right|\\
\nonumber&\leq  \frac{K_1}{\Delta \tau} \left\Vert \rho^{n+1}-\rho^n \right\Vert \left\Vert  \varphi_k \right\Vert\\
\nonumber& =  \frac{K_1}{\Delta \tau} \left\Vert  \int_{\tau^n}^{\tau^{n+1}} \partial_\tau\rho(s)ds \right\Vert  \left\Vert  \varphi_k \right\Vert\\
\nonumber & \leq C(u) k^2 \left\Vert  \varphi_k \right\Vert\\
\nonumber & \leq  \xi \left\Vert  \varphi_k \right\Vert^2 + c_{\xi} C(u) k^4. 
\end{align}
Finally, we bound $I_5$  by 
\begin{align}
\nonumber |I_5|&\leq K(K_1, \tilde{K_0}) |e_1^{n+1}|   \left\Vert  \varphi_k \right\Vert + \xi \| \varphi_k\|^2 + c_{\xi} K_1^2 \mathcal{K}_0^2 \Delta \tau^2 + \xi \left\Vert  \varphi_k \right\Vert^2 + c_{\xi} C(u) k^4\\
\nonumber &\leq K(K_1, \tilde{K_0}, \mathcal{K}_0^2) \left( c_{\xi}|e_1^{n+1}|^2  + 3 \xi \| \varphi_k\|^2 + c_{\xi}  \Delta \tau^2  + c_{\xi} C(u) k^4\right)
\end{align}
By using $2(b-a, b) = b^2 - a^2 + (b-a)^2$, we also note the following estimate holds:
\begin{align}
\label{I1}&\left( \Delta_{\tau} \psi^n,  \psi^{n+1} \right) \geq \frac{1}{2 \Delta \tau}  \left\{ \| \psi^{n+1} \|^2 -  \| \psi^{n} \|^2\right\}\\
\label{I2}&\Delta_{\tau} e_1^n e_1^{n+1} \geq \frac{1}{2 \Delta \tau}  \left\{ | e_1^{n+1} |^2 -  | e_1^{n} |^2\right\}.
\end{align}
We now consider the equations corresponding to the position of the moving boundary. We write 
\begin{align}
  \label{error2} \Delta_\tau e_1^n =  \Delta_\tau W^n -  \Delta_\tau h^n
\end{align}
Multiplying  \eqref{error2} by $e^{n+1}_1$ gives
\begin{align}
\nonumber \Delta_\tau e_1^n e^{n+1}_1 &=  (\Delta_\tau W^n - \Delta_\tau h^n) e^{n+1}_1 \\
\label{b7}&= [\Delta_\tau W^n  - (h^{n+1})^{\prime}]e_1^{n+1} +[ (h^{n+1})^{\prime}  - \Delta_\tau h^n] e^{n+1}_1. 
\end{align}
Using \eqref{I2} in \eqref{b7}, we get
\begin{align}
\label{identity}\frac{1}{2 \Delta \tau}  \left\{ | e_1^{n+1} |^2 -  | e_1^{n} |^2\right\} \leq |\Delta_\tau W^n  - (h^{n+1})^{\prime}|  |e^{n+1}_1|   + |(h^{n+1})^{\prime}  - \Delta_\tau h^n| |e^{n+1}_1|. 
\end{align}
By using Taylor series expansion and Preposition \ref{higher}, we estimate the  second term in \eqref{identity} as follows:
\begin{align}
\nonumber |(h^{n+1})^{\prime}  - \Delta_\tau h^n| |e^{n+1}_1|&
 \leq \frac{\Delta \tau}{2} \sup_{[\tau^n, \tau^{n+1}]} |h^{\prime \prime}(\tau)||e^{n+1}_1|\\
\label{b8} & \leq c_{\bar{\xi}} |e^{n+1}_1|^2 +  \bar{\xi}\Delta \tau^2 \mathcal{K}_1^2. 
\end{align}
Using Lemma \ref{remark1}, \eqref{b8} and Young's inequality in \eqref{identity} yields
\begin{align}
\nonumber \frac{1}{2 \Delta \tau}  \left\{ | e_1^{n+1} |^2 -  | e_1^{n} |^2\right\} \leq &K(\tilde{K}_0, K_2,  \hat{c}) \left(c_{\bar{\xi}} |e_1^{n+1}|^2 + (\zeta + c_{\zeta}) \bar{\xi} k^4  + (\zeta + c_{\zeta}) \bar{\xi} \| \psi^n\|^2 + \xi \bar{\xi} k^2   +  \xi \bar{\xi}  \left\lVert \frac{\partial \psi^n}{\partial y} \right\lVert^{2}\right)\\
\label{b10}& + K(K_1, K_2, \tilde{c})\left(\bar{\xi}  |e_1^n|^2 + c_{\bar{\xi}} | e_1^{n+1} |^2 + \bar{\xi} \Delta \tau^2 \right).
\end{align}
Taking $\varphi_k = \psi^{n+1}$ in  \eqref{I} and adding the result to \eqref{b10}, we get the following estimate:
\begin{align}
\nonumber (W^{n+1})^2\frac{1}{2 \Delta \tau}  \left\{ \| \psi^{n+1} \|^2 -  \| \psi^{n} \|^2\right\} & + \frac{1}{2 \Delta \tau}  \left\{ | e_1^{n+1} |^2 -  | e_1^{n} |^2\right\} + \left\lVert \frac{\partial \psi^{n+1} }{\partial y}  \right\lVert^2 \\
\nonumber & \leq K_3(\tilde{c}, \hat{c}, \xi, \bar{\xi}, c_{\xi}, c_{\bar{\xi}},  \tilde{K}_0) \left( \Delta \tau^2 + k^2 + k^4 \right) \\
\nonumber& +K_4(\tilde{c}, \hat{c}, \xi, \bar{\xi}, c_{\xi}, c_{\bar{\xi}},   \tilde{K}_0, K^*) \left( \left\lVert \psi^n \right\lVert^2 + \left\lVert \psi^{n+1} \right\lVert^2 +  |e^{n}_1|^2  + |e_1^{n+1}|^2 \right) \\
\label{inequality}& + K_5(\zeta, \bar{\xi})  \left\lVert \frac{\partial \psi^n}{\partial y}  \right\lVert^2  + K_6(\xi, \bar{\xi})  \left\lVert \frac{\partial \psi^{n+1}}{\partial y} \right\lVert^2.
\end{align}
Adding $(1 - K_6(\xi, \bar{\xi}) - K_5(\zeta, \bar{\xi}))\left\lVert \frac{\partial \psi^n}{\partial  y}\right\lVert $ to both sides of \eqref{inequality} and multiplying the result by $2\Delta \tau$, one gets
\begin{align}
\nonumber  \| \psi^{n+1} \|^2 -  \| \psi^{n} \|^2 +  & | e_1^{n+1} |^2 -  | e_1^{n} |^2 +  2 \Delta \tau (1 - K_6(\xi, \bar{\xi} ) \left\lVert \frac{\partial \psi^{n+1} }{\partial y}  \right\lVert^2 + 2 \Delta\tau (1 - K_6(\xi, \bar{\xi}) - K_5(\zeta, \bar{\xi})) \left\lVert \frac{\partial \psi^{n} }{\partial y}  \right\lVert^2  \\
\nonumber & \leq \Delta \tau K_3(\tilde{c}, \hat{c}, \xi, \tilde{K}_0) \left( \Delta \tau^2 + k^2 \right) + 2 \Delta \tau (1 - K_6(\xi, \bar{\xi})) \left\lVert \frac{\partial \psi^{n} }{\partial y}  \right\lVert^2 \\
\label{inequality1}&+ \Delta \tau K_4(\tilde{c}, \hat{c}, \xi, \bar{\xi}, \tilde{K}_0, K^*) \left( \left\lVert \psi^n \right\lVert^2 + \left\lVert \psi^{n+1} \right\lVert^2
  +  |e^{n}_1|^2   +  |e_1^{n+1}|^2 \right). 
\end{align}
With the notation: $c_n:= \| \psi^{n} \|^2  +   | e_1^{n} |^2,\;  d_n := \left\lVert \frac{\partial \psi^{n} }{\partial y}  \right\lVert^2,\; E =  \Delta \tau^2 + k^2,\; \alpha = 2(1 - K_6(\xi, \bar{\xi}) - K_5(\zeta, \bar{\xi})), \gamma =  2(1 - K_6(\xi, \bar{\xi})) $
the inequality \eqref{inequality1} can be rewritten as
\begin{align}
\label{c1}(1 - \Delta \tau K_4) c_{n+1} +  \gamma \Delta \tau d_{n+1} + \alpha \Delta \tau d_{n} \leq (1 + \Delta \tau K_4) c_{n} + \gamma \Delta \tau d_{n} + \Delta \tau K_3 E.
\end{align}
By dividing both sides of \eqref{c1} by $(1 - \Delta \tau K_4)$, we get the following inequalities:
\begin{align}
\label{c3}c_{n+1} + a_1  d_{n+1} + a_2 d_{n} &\leq a_3 ( c_{n} + a_1 d_{n} )+ a_4 E,\\
\label{c4}a_3(c_n + a_1 d_n) + a_3 a_2 d_{n-1} &\leq a_3^2  (c_{n-1} + a_1 d_{n-1} )+ a_3 a_4 E,\\
\label{c5}a_3^2(c_{n-1} + a_1 d_{n-1}) + a_3^2 a_2 d_{n-2} &\leq a_3^3  (c_{n-2} + a_1 d_{n-2} )+ a_3^2 a_4 E,\\
\nonumber&\vdots\\
\label{c6}a_3^n(c_{1} + a_1 d_{1}) + a_3^n a_2 d_{0} &\leq a_3^{n+1}  (c_{0} + a_1 d_{0} )+ a_3^n a_4 E,
\end{align}
where
\begin{align*}
a_1= \frac{\gamma \Delta \tau}{1 - \Delta \tau K_4},\; a_2= \frac{\alpha \Delta \tau}{1 - \Delta \tau K_4},\;  a_3 = \frac{1 + \Delta \tau K_4}{1 - \Delta \tau K_4},\; a_4 = \frac{\Delta \tau K_3}{1 - \Delta \tau K_4}.
\end{align*}
Adding \eqref{c3}-\eqref{c6}, we obtain
\begin{align*}
c_{n+1} + a_1  d_{n+1} + a_2 [d_{n} + a_3 d_{n-1} + a_3^2 d_{n-2} + \cdots +  a_3^n d_0] \leq a_4 E [1 + a_3 + a_3^2+ \cdots + a_3^n ].
\end{align*}
For sufficiently small $\Delta \tau$, we can write 
\begin{align}
 \label{b9}\| \psi^{n+1} \|^2  +   | e_1^{n+1} |^2 + \alpha \Delta \tau \sum_{i=1}^{n+1}  \left\lVert \frac{\partial \psi^{j} }{\partial y}  \right\lVert^2 \leq K_7\{\Delta \tau ^2 + k^2\}.
\end{align}
This completes the proof. 
\end{proof}
\begin{theorem}
Assume the assumption of Theorem \ref{theorem1} holds.	Then there exists a constant $K_8 = K(K_7, \tilde{K}_0) >0$ such that 
	\begin{align*}
	\max_{0\leq n\leq M} \| e^n\|^2 + \max_{0\leq n\leq M} | e_1^n|^2\leq K_6 \{ \Delta \tau^2 + k^2 \}. 
	\end{align*}
\end{theorem}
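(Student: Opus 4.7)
The plan is to reduce this statement to the bound already established in Theorem \ref{theorem1} by using the decomposition $e^n = \psi^n + \rho^n$ together with interpolation estimates from Lemma \ref{lemma1}.

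First, I would apply the elementary triangle-type inequality $\|e^n\|^2 \le 2\|\psi^n\|^2 + 2\|\rho^n\|^2$. The first contribution is bounded uniformly in $n$ by Theorem \ref{theorem1}: dropping the nonnegative gradient sum on the left-hand side of \eqref{b9} gives $\|\psi^{n+1}\|^2 \le K_7\{\Delta\tau^2+k^2\}$ for every $n\in\{0,1,\dots,M-1\}$, and the initial term $\|\psi^0\|^2$ is controlled by the choice of the approximation $U^0$ of $u_0$ in $V_k$, so it is $O(k^2)$ as well. For the second contribution, I would invoke Lemma \ref{lemma1}\ref{l1} on the interpolation error to get $\|\rho^n\|^2 \le \gamma_1^2 k^4 \|u^n\|^2_{H^2(0,1)}$.

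At this point the only thing to check is uniform control of $\|u^n\|_{H^2(0,1)}$ in $n$. This is precisely where Proposition \ref{higher} enters: under the regularity hypothesis $b\in W^{2,2}(0,T)$ and $u_0\in H^2(0,s_0)$, the weak solution satisfies $u\in L^2(S_T,H^2(0,1))$, and the higher regularity of $\partial_\tau u$ together with the embedding into continuous functions in time ensures that $\sup_{\tau\in S_T}\|u(\tau,\cdot)\|_{H^2(0,1)}$ is finite. Therefore $\max_{0\le n\le M}\|\rho^n\|^2 \le C k^4 \le Ck^2$ (for sufficiently small $k$), which gets absorbed into the $k^2$ term. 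Combining both pieces yields
\begin{align*}
\max_{0\le n\le M}\|e^n\|^2 \le 2K_7\{\Delta\tau^2+k^2\} + 2\gamma_1^2 k^4 \sup_n\|u^n\|^2_{H^2(0,1)} \le K_8\{\Delta\tau^2+k^2\}.
\end{align*}
The bound on the moving boundary error $\max_{0\le n\le M}|e_1^n|^2\le K_7\{\Delta\tau^2+k^2\}$ is obtained directly from \eqref{b9} without further work, since $|e_1^n|^2$ already appears on the left-hand side there. Adding the two estimates and renaming the constant completes the proof.

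There is no real obstacle here; the statement is essentially a corollary that repackages Theorem \ref{theorem1} into the natural error $e^n = U^n - u^n$ by paying the (harmless) interpolation cost $\rho^n$. The only mildly delicate point is to justify the uniform-in-$n$ bound on $\|u^n\|_{H^2(0,1)}$, which I would handle by appealing to Proposition \ref{higher} and the continuous embedding of the relevant Bochner space into $C([0,T];H^2(0,1))$ (or, failing that, by summing $k^4\|u^n\|_{H^2}^2$ against $\Delta\tau$ and using $u\in L^2(S_T,H^2)$, which still gives the desired $O(k^2)$ order after absorbing one factor of $k^2$).
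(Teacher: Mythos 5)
Your proposal is correct and follows essentially the same route as the paper: the splitting $e^n=\psi^n+\rho^n$ with $(a+b)^2\leq 2(a^2+b^2)$, the bound on $\|\psi^n\|^2$ and $|e_1^n|^2$ taken directly from Theorem \ref{theorem1}, and the interpolation estimate of Lemma \ref{lemma1} for $\|\rho^n\|^2$. The only difference is that you make explicit the uniform-in-$n$ control of $\|u^n\|_{H^2(0,1)}$ via Proposition \ref{higher}, a point the paper absorbs silently into the constant $c\tilde{K}_0$.
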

\begin{proof} Relying on Theorem \ref{theorem1} and using the relation $(a+b)^2\leq 2(a^2 + b^2)$, it holds for all $0\leq n\leq M$,
\begin{align*}
\| e^n\|^2 = \| \psi^n + \rho^n\|^2 & \leq 2(\| \psi^n\|^2 + \|\rho^n \|^2)\\
& \leq  K_7 \{ \Delta \tau^2 + k^2 \} + c \tilde{K}_0 k^2\\
&\leq  K_8(K_7, \tilde{K}_0) \{ \Delta \tau^2 + k^2 \},
\end{align*}
where $K_7$ is as used in \eqref{b9}.
\end{proof}
\begin{remark}
If we assume only $u_0 \in H^2(0, s_0)$ and $b\in W^{1,2}(0, T)$, then we can only prove at most $\sqrt{\Delta \tau}$ convergence rate in time. This fact is a consequence of handling the estimate \eqref{3a57}.
\end{remark}
\section{Numerical experiments}\label{experiments}
In this section, we present  numerical results to substantiate the theoretically obtained order of convergence in space and time proposed in Section \ref{fully}.  We solve \eqref{a19}--\eqref{a22} by using the method of lines; for more details see, for instance, \cite{larsson2008partial}.   
We refer the interested  reader to \cite{nepal2021moving}  for more technical  information on experimental data, implementation of the numerical method, and additional simulation results. The reader may also consult  \cite{nepal2021error} for \textit{a priori} and \textit{a posteriori} error estimates of our semi-discrete finite element approximation.  
Here we take the final time $T_f$ to be $10$ minutes. We discretize the domain in a uniform mesh size and use piecewise linear functions as basis for the subspace $V_k$. 
The values of parameters are taken to be $s_0 = 0.01$ (mm), $m_0$ = 0.1 (gram/mm$^3$) and  $b=1$ (gram/mm$^3$). We take the value $3.66 \times 10^{-4}$ (mm$^2$/min) for the diffusion constant $D$, $0.564$ (mm/min) for absorption rate $\beta$ and $2.5$ for Henry's constant \text{H}.   We choose $\sigma(s(t)) = s(t)/10$ (gram/mm$^3$) and $a_0 = 50$ (mm$^4$/sec/gram).  This specific choice of  parameters is taken from  \cite{nepal2021moving, nepal2021error}.\\
To test numerically the  convergence order in space, we fix the uniform time step size $\Delta t=0.0001$. As we do not know the exact solution, we compute the finite element approximation on a fine mesh size with the total number of node $N$ to be $1280$ for a reference solution. To compute errors,  we compare our reference solution  against  finite element approximations corresponding to different mesh sizes with increasing total number of nodes as $20, 40, 80, 160, 320$, and  $640$. We calculate the convergence order in space based on any two consecutive calculations of discrete errors. The obtained  errors and
convergence orders in space are  listed in Table \ref{tab:title32}.

\begin {table}[h]
\begin{center}
	\begin{tabular}{ |p{1cm}|p{3.7cm}|p{3cm}|p{2.7cm}|p{3cm}| }
		\hline
		$N$ & $\displaystyle\max_{0\leq n \leq M}\|U^n - u^n\|_{L^2(0, 1)}$& Convergence order& $\displaystyle\max_{0\leq n \leq M}|W^n - h^n|$ & Convergence order \\
		\hline
		20 & 0.5941833& 1.017 & 0.4859140 &0.825\\
		40 & 0.2934375 &1.049 & 0.2741301 & 1.020\\
		 80 &0.1417237& 1.103 & 0.1351390 & 1.110\\
		160  & 0.0659733& 1.224 & 0.0626064& 1.234\\
		320  & 0.0282283& 1.586& 0.0266161 & 1.592\\
		640  & 0.0094013&  & 0.0088266& \\
		\hline
	\end{tabular}
	\caption {Errors and convergence orders in space  with fixed $\Delta t = 10^{-4}$.}
	\label{tab:title32} 
\end{center}
\end {table}
\begin{figure}[ht] 
	\centering
	\includegraphics[width=0.43\textwidth]{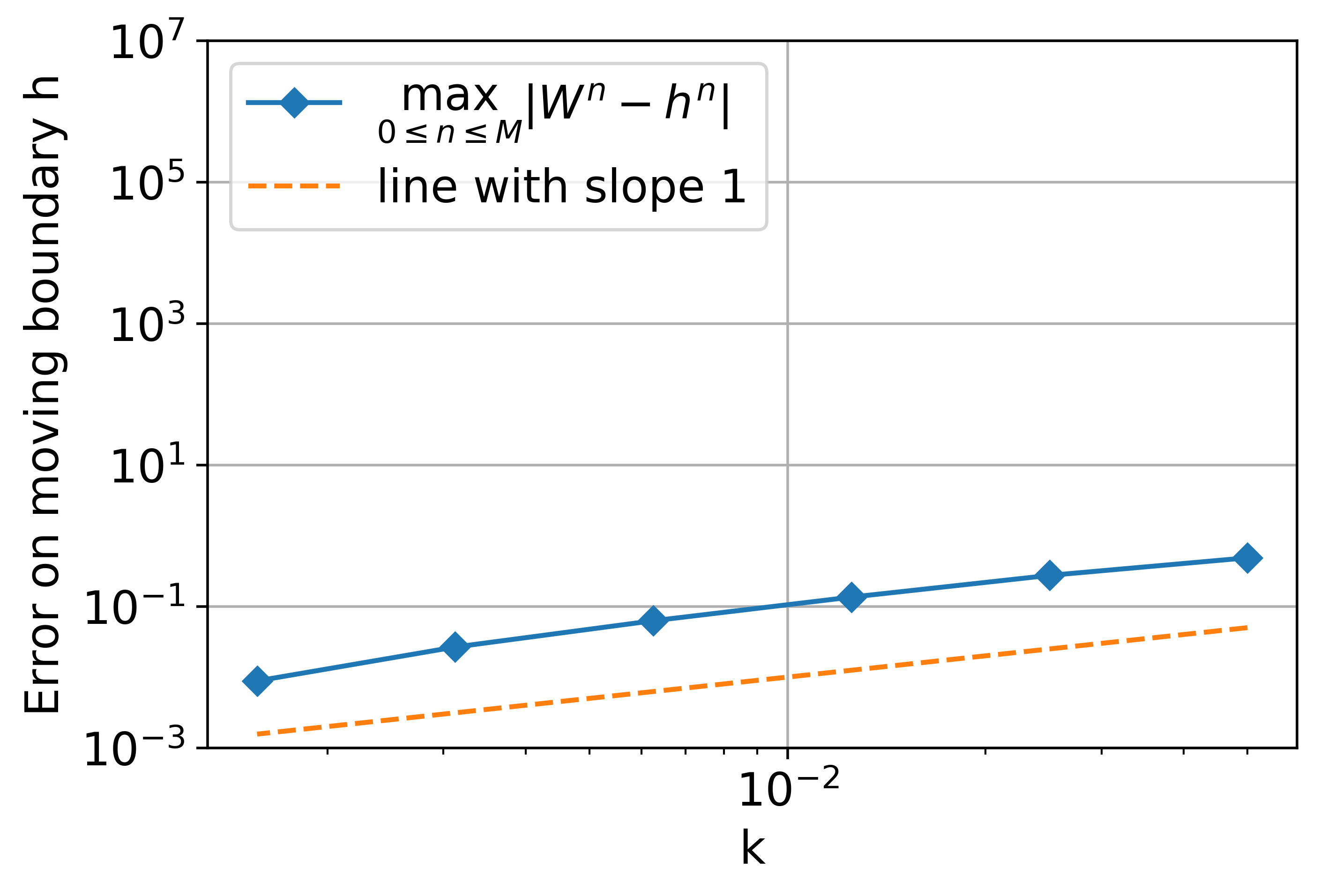}
	\hspace{0.1cm}
	\includegraphics[width=0.43\textwidth]{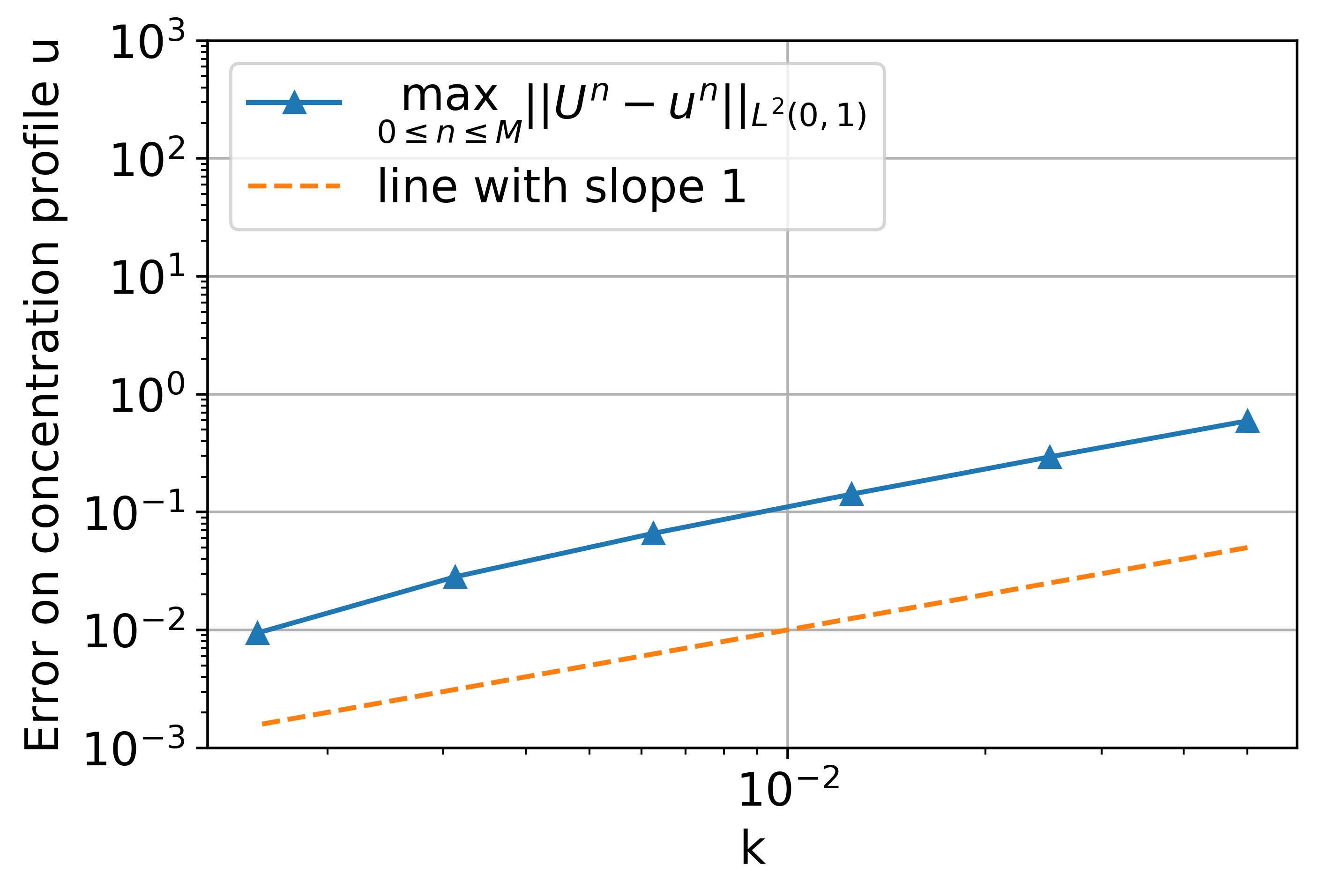}
	\caption{Convergence order in space when time step size $\Delta t = 10^{-4}$  is fixed. Dash lines are lines of slope 1.  Left: Log log scale plot of error on the boundary $\max_{0\leq n \leq M}|W^n - h^n|$.  Right: Log log scale plot of error on the concentration $\max_{0\leq n \leq M}\|U^n - u^n\|_{L^2(0, 1)}$.}
	\label{PlotlogLogfullySpace}
\end{figure}
We show in Figure \ref{PlotlogLogfullySpace} the computed convergence order in space for the approximation  of the moving boundary position and of the concentration profile.\\

To capture numerically the convergence order in time, we fix the total number of space node $N$ to be $320$  and choose $\Delta t = 10^{-3}$. We compute the finite element approximation on a fine time step size $\bar{\Delta t} = \Delta t/64$ for a reference solution. We then compute approximations for different time step sizes $\Delta t, \Delta t/2, \Delta t/4, \Delta t/8, \Delta t/16, \Delta t/32$  and then calculate the  order of convergence in time. The errors and
convergence orders in time are  listed in Table \ref{tab:title33}. We show in Figure \ref{PlotlogLogfullyTime} the computed convergence order in time for the approximation  of both the position of the moving boundary and corresponding concentration profile.
\begin {table}[h]
\begin{center}
	\begin{tabular}{ |p{1.6cm}|p{3.6cm}|p{2.9cm}|p{2.6cm}|p{2.9cm}| }
		\hline
		$\Delta t$ & $\displaystyle\max_{0\leq n \leq M}\|U^n - u^n\|_{L^2(0, 1)}$& Convergence order& $\displaystyle\max_{0\leq n \leq M}|W^n - h^n|$ & Convergence order \\
		\hline
		0.001  & 0.0000229& 1.022 & 0.0000541 &1.009\\
		 0.0005 & 0.0000113 &1.046 & 0.0000268 & 1.085\\
		 0.00025 &0.0000054& 1.099 & 0.0000126 & 1.087\\
		 0.000125 &0.0000025& 1.223 & 0.0000026& 1.231\\
		 0.0000625  & 0.0000010& 1.597& 0.0000025 & 1.349\\
		 0.00003125  & 0.0000003&  & 0.0000009& \\
		\hline
	\end{tabular}
	\caption {Errors and convergence orders in time  with fixed $N = 320$.}
	\label{tab:title33} 
\end{center}
\end {table}


\begin{figure}[ht] 
	\centering
	\includegraphics[width=0.43\textwidth]{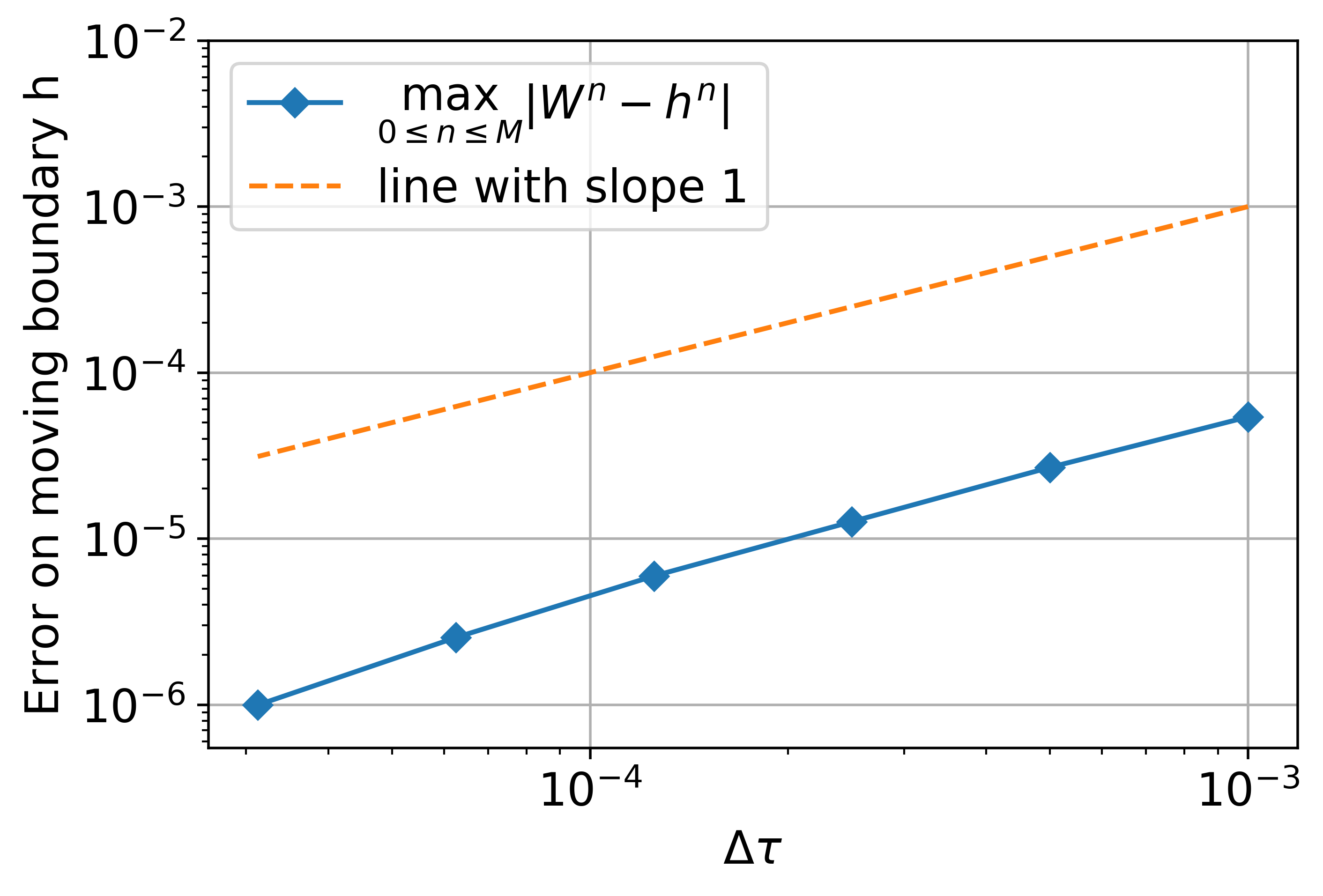}
	\hspace{0.1cm}
	\includegraphics[width=0.43\textwidth]{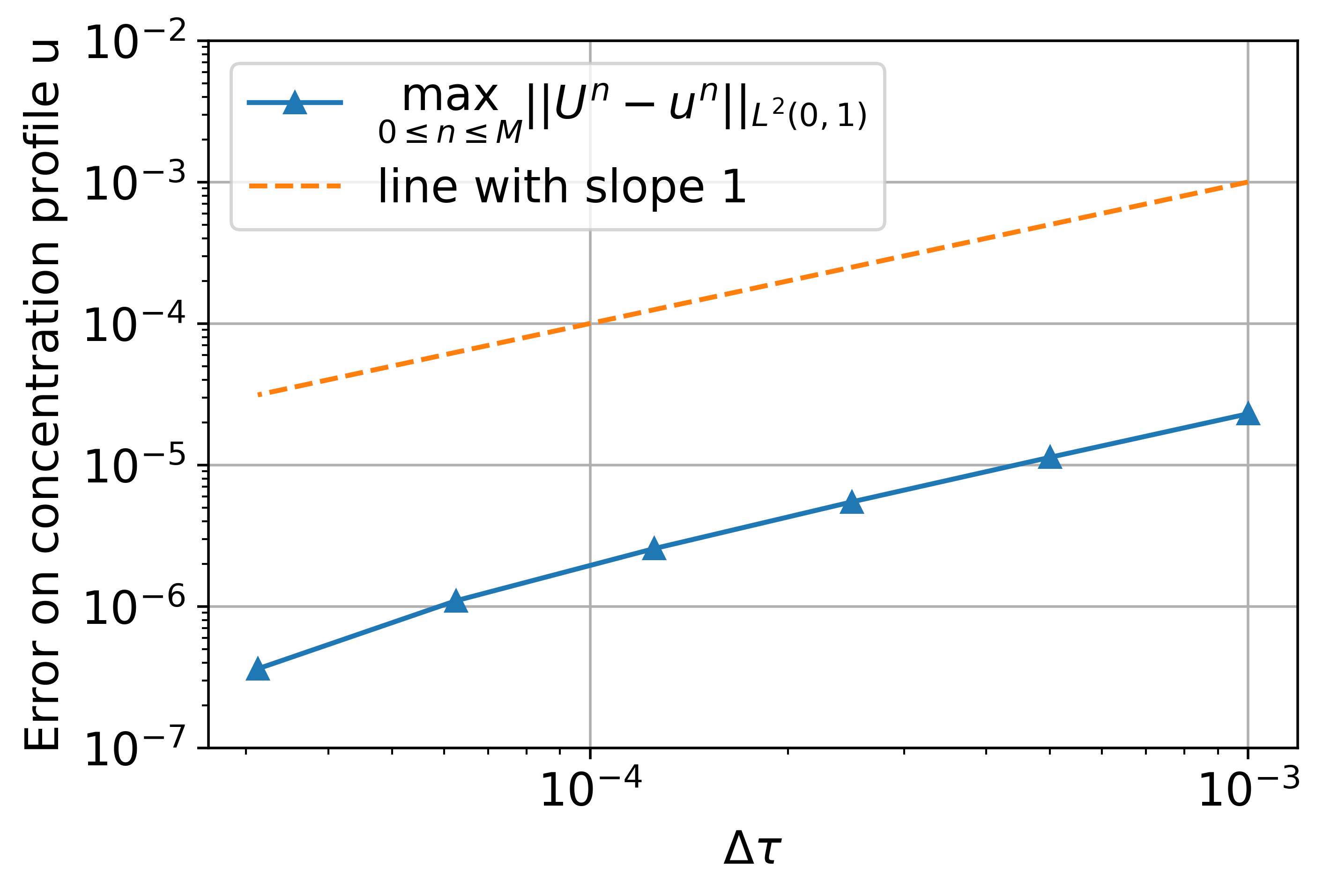}
	\caption{Convergence order in time when space mesh size is fixed with $N = 320$. Dash lines are lines of slope 1.  Left: Log log scale plot of error on the boundary $\max_{0\leq n \leq M}|W^n - h^n|$.  Right: Log log scale plot of error on the concentration $\max_{0\leq n \leq M}\|U^n - u^n\|_{L^2(0, 1)}$.}
	\label{PlotlogLogfullyTime}
\end{figure}
These numerical results are in agreement with the convergence orders proven in Section \ref{fully}.

\section{Conclusion}\label{conclusion}


We shown a fully discrete scheme for the numerical approximation of a moving boundary problem describing diffusants penetration into rubber.
The proposed scheme utilizes the Galerkin finite-element method in space and the backward Euler method in time.
By using Brouwer's fixed-point theorem, we were able to prove the existence of solution to the fully discrete problem. As main result, we obtained  \textit{a priori} error estimates for the mass concentration of diffusants as well as for the position of the moving boundary.  
 The convergence turns to be of first order in both space and time  for the approximation of the   mass concentration of diffusants as well as for the approximation of the position of the moving boundary. Finally, we illustrated numerically the order of convergence in space and time to confirm  the  theoretically obtained results.  It could be that the order of convergence in time can be improved by selecting other, perhaps better suited, time discretization schemes than the backward Euler one.  Because of the presence of the moving boundary, the convergence order is space is lower than what we would expect  for the finite element approximation of standard linear parabolic problems. This is in agreement with what is stated in the literature concerning the numerical approximation of one-dimensional moving-boundary problems.

\section*{Acknowledgments}
The activity of S.N.  and A.M. is financed partially by the Swedish Research Council's project "{\em  Homogenization and dimension reduction of thin heterogeneous layers}", grant nr. VR 2018-03648. A.M. also thanks the Knowledge Foundation for the grant KK 2019-0213, which led to the formulation of this problem setting. We benefited of fruitful discussions on closely related topics with T. Aiki (Tokyo), K. Kumazaki (Nagasaki), N. Kr\"oger (Hamburg), and U. Giese (Hannover). 

\begin{center}
	\bibliographystyle{plain}
	\bibliography{Fully_discrete_approximation_paper_3}
\end{center}
\end{document}